\newtheorem{theorem}{Theorem}[section]
\newtheorem{proposition}[theorem]{Proposition}
\newtheorem{remark}[theorem]{Remark}
\newtheorem{example}[theorem]{Example}
\newtheorem{conclusion}[theorem]{Conclusion}
\newcommand{\R}{\mathbb R}
\newcommand{\diff}[1]{{\mathrm{d}{#1}}}
\definecolor{darkgreen}{rgb}{0.0, 0.42, 0.24}
\newcommand{\bu}{\mathbf{u}}
\newcommand{\bbu}{\mathbf{u}}
\newcommand{\ww}[1]{\underline{#1}}
\renewcommand{\div}{\operatorname{div}}
\newcommand{\dd}{\mathrm{d}}
\newcommand\norm[1]{\left\lVert#1\right\rVert}
\newcommand{\cc}[1]{\chi_{[t^0,t^{#1}]}}
\newcommand{\tp}{t^{n+1}}
\newcommand{\tn}{t^{n}}
\renewcommand{\vec}[1]{\ww{#1}}
\NewDocumentCommand{\mat}{mo}{%
  \IfValueTF{#2}{%
    \textrm{#1}{#2}
  }{%
    \textrm{#1}\,
  }%
}
\def\L{\mathcal{L}}
\def\I{\mathcal{I}}
\def\R{\mathbb{R}}
\def\bbc{\underline{\boldsymbol{\alpha}}}
\def\bc{\boldsymbol{\alpha}}
\def\bbu{\underline{\boldsymbol{u}}}
\def\bu{\boldsymbol{u}}
\def\br{\boldsymbol{r}}
\def\bd{\mathbf{d}}
\def\bbd{\underline{\mathbf{d}}}
\def\bphi{\underline{\phi}}
\def\M{\underline{\underline{\mathrm{M}}}}
\def\S{\underline{\underline{\mathrm{S}}}}
\definecolor{darkspringgreen}{rgb}{0., 0.55, 0.3}
\definecolor{dartmouthgreen}{rgb}{0.05, 0.5, 0.06}
\definecolor{etonblue}{rgb}{0.59, 0.78, 0.64}
\definecolor{airforceblue}{rgb}{0., 0.4, 0.66}
\definecolor{arylideyellow}{rgb}{0.91, 0.84, 0.42}
\definecolor{emerald}{rgb}{0.31, 0.78, 0.47}
\definecolor{uclagold}{rgb}{1.0, 0.7, 0.0}
\definecolor{cadmiumorange}{rgb}{0.93, 0.53, 0.18}
\begin{document}
\title{DeC and ADER: Similarities, Differences and a Unified Framework}

\author{Maria Han Veiga\thanks{Michigan Institute of Data Science, University of Michigan, USA}, and Philipp \"Offner\thanks{Institute of Mathematics, Johannes Gutenberg-University, Germany}, and Davide Torlo\thanks{Inria Bordeaux Sud-Ouest, - 200 av.  de la vieille tour, 33405 Talence, France}
}
\date{}
\maketitle

%\keywords{test} 
\begin{abstract}
In this paper, we demonstrate that the explicit ADER approach as it is used \textit{inter alia} in \cite{zanotti2015space} 
can be seen as a special interpretation of the deferred correction (DeC) method 
as introduced in \cite{dutt2000dec}.
By using this fact, we are able to embed ADER in a theoretical background of time integration schemes and prove the relation between the accuracy order and the number of iterations which are needed to reach the desired order.
Next, we extend our investigation to stiff ODEs, treating these source terms implicitly. Some differences in the interpretation and implementation can be found. Using DeC yields typically a much simpler implementation, while ADER benefits from a higher accuracy, at least for our numerical simulations.  Then, we also focus on the PDE case and present common space-time discretizations using DeC and ADER in closed forms.
Finally, in the numerical section we investigate A-stability for the ADER approach - this is done for the first time up to our knowledge - for different order using several basis functions and compare them with the DeC ansatz. Then, we compare the performance of ADER and DeC for stiff and non-stiff ODEs and verify our analysis focusing on two basic hyperbolic problems.

\end{abstract}
%\tableofcontents
\section{Introduction}\label{sec:intro}
 
Very high-order methods have become rather ubiquitous in the field of numerical methods for hyperbolic partial differential equations. 
There are many methods which obtain an arbitrarily high-order in its spatial discretization, namely, discontinuous Galerkin \cite{glaubitz2020stable,offner2019error}, spectral difference method \cite{Liu2006,glaubitz2018application}, etc. 
When considering time-dependent problems, the time integration must have the same order of convergence as of the spatial one, in order to formally guarantee the high order space-time convergence. Explicit Runge--Kutta methods have long been used for its simplicity and ease of implementation, however, they are difficult to generalize to very high order because there is no automatic procedure to generate the Butcher tableau. There are explicit timestepping methods which promise arbitrarily high accuracy, without the necessity to compute Butcher like coefficient tables, such as deferred correction \cite{dutt2000dec}, ADER \cite{toro2001towards},
SBP \cite{nordstrom2013summation} or (continuous or discontinuous) Galerkin approaches also in time.

The deferred correction (DeC) method was first introduced in the context of ODEs, and later, formulated as a timestepping scheme for PDEs in conjunction with finite element (FE) methods \cite{abgrall2017dec,liu2008strong}.
The key idea of DeC is based on the Picard-Lindel\"of Theorem  and  especially on the Picard iteration.
In every iteration step, we decrease the error of the numerical solution until we reach a fixed  bound.
Extension to implicit or semi-implicit variations exist \cite{minion2003dec}
but we concentrate mainly on the explicit version. From our point of view,  some advantages of DeC compared 
to explicit RK are the possibility to use an FE ansatz in space and avoid the inversion of the accompanying mass matrix to obtain a full discretization of a PDE as described in \cite{abgrall2017dec} or to obtain arbitrarily high order time accuracy without computing the order conditions checks on the coefficients of the Butcher tableau. 

The ADER approach, introduced firstly in \cite{toro2001towards} and further developed in many other works, e.g. \cite{dumbser2008unified,titarev2002ader,zanotti2015space},
remains quite elusive and misunderstood in the broad community of numerical analysis for hyperbolic problems, despite being able to achieve arbitrarily high order in time and being  able to be even more efficient in terms of runtime than classical RK approaches \cite{balsara2013efficient}. 

The first ADER methods for linear hyperbolic equations were presented in \cite{Schwartzkopff2002ADER,toro2001towards}.
The (historical) ADER (Advection-Diffusion-Reaction) approach, described in the paper ``ADER: Arbitrary High Order Godunov Approach" \cite{titarev2002ader}, extends the method to nonlinear hyperbolic systems, achieving arbitrarily high order accuracy both in time and space. The key ingredient of this approach is to consider a generalized Riemann Problem \cite{toro2009riemann}. A comprehensive stability and truncation-error analysis for Finite--Volume ADER schemes can be found in \cite{TitarevWAF2006}.

Later, in \cite{dumbser2008unified}, a different formulation of ADER is presented, which can be interpreted as a space-time finite element method.  This is the ADER approach that we consider in this paper, referring to it as the \textit{modern ADER} (described in detail in sections \ref{sec:ader} and \ref{sec:beyondODE}). Although used abundantly in many codes, the theoretical properties of the (modern) ADER are somewhat lacking. Some literature shows, for example, that for a linear homogeneous system, a finite number of iterations is sufficient to convergence \cite{hjackson2017}. However, questions on how to choose integration points, polynomial bases, are only addressed empirically up-to-our knowledge. For a comprehensive description of the development of ADER schemes, from their conception to the modern version of ADER, please refer to \cite{Busto2020}.

In order to deal with stiff problems, implicit methods must be employed to guarantee stability, see \cite{hairer96ODE2,butcher08numODE} for reviews of implicit ODE solvers. Carrying on with the investigation of similarities between DeC and ADER, we study their implicit variants as proposed in \cite{abgrall2018asymptotic} and \cite{dumbser2007FVStiff}, respectively.

Since DeC and the modern ADER are both iterative approaches, the questions that motivated this paper are the following:
\begin{enumerate}
\item Is there a connection between the ADER timestepping method and the DeC timestepping method for explicit ODEs? And if yes, what is this connection?
\item Does the connection between ADER and DeC help us study properties of the ADER scheme?
\item How do these methods change when we consider stiff ODEs?
\end{enumerate}

In order to address these questions, we first introduce the two considered methods. In section \ref{sec:ader}, we describe the ADER approach and, 
in section \ref{sec:originalDeC}, the Deferred Correction method. Then, in section \ref{sec:Relation}, we show that these two methods are very similar and that DeC can be written as an ADER scheme and \emph{vice versa}. Furthermore, we study how these methods differ when considering stiff ODEs and PDEs. These connections and differences are then verified through numerical experiments shown in section \ref{sec:results}. Finally, we conclude the paper with a discussion in section \ref{se:Summary}.
\section{The (Modern) ADER Approach}
\label{sec:ader}

In the following section, we will give a short introduction about the ADER approach and how we understand it.
Actually, it is an iterative process to 
obtain the numerical solution 
of a given space--time PDE. 
Therefore, we consider for simplicity
the following scalar (non)linear hyperbolic problem 
\begin{equation}\label{eq:scalarPDE}
  \partial_t u + \partial_x f(u) = 0.
\end{equation}
Suppose that $u(x,t)$ solves this equation under sufficient boundary and initial conditions. The main idea of the modern ADER is based on the variational formulation in the finite element context. We approximate the field $u(x,t)$
in a set of space nodes $\{x_i\}_{i=1}^I \subset \mathbb{R}$ and we denote with the vector $\bc(t) = \lbrace u(x_i,t) \rbrace_{i=1}^I : \R^+\to \R^I$ the semi-discretization in space of the function $u$\footnote{By switching from the PDE formulation to the ODE setting, we change also the used notation. Here, $u$ is used in the PDE case and $\alpha$ when speaking about an ODE.}. 

For the time discretization, we consider a time interval 
$T^n := \left[t^n, t^{n+1} \right]$, 
and we represent $\bc(t)$ as a 
linear combination of basis functions in the time component $t$:
\begin{equation}\label{eq:basis_reconstruction}
\bc(t) = \sum_{m=0}^M \phi_m(t)\bc^m = \bphi(t)^T\bbc,
\end{equation}
where $\bphi = [ \phi_0, \dots , \phi_M ]^T:T^n\to \R^{M+1}$ is the vector of time basis functions and $\bbc=[ \bc^0, \dots, \bc^M]^T \in \R^{(M+1)\times I}$ 
is the vector of coefficients related to the time basis functions for each degree of freedom in space. Typically, $\bphi$ are Lagrange basis functions in some nodes 
\begin{equation}\label{eq:subtimesteps}
\lbrace t_{m} \rbrace_{m=0}^M \subset T^n,
\end{equation}
e.g. equispaced, Gauss-Lobatto or Gauss-Legendre nodes.

Proceeding with the weak formulation in time, 
we consider a smooth test function $\psi(t): \R \to \R$ 
and we integrate over some interval $T^n$. % := \left[t^n, t^n + \Delta t \right]$.

We consider, from now on, only the time derivatives of the hyperbolic equation \eqref{eq:scalarPDE}. In other words, we can say that, through the method of lines, we make a splitting in space and time because the ADER approach will be used as the time integration method and it will be fully explicit. 
For the space discretization, one can use their favorite numerical scheme, \textit{inter alia} discontinuous Galerkin (DG), flux reconstruction (FR), finite volume (FV) or ENO/WENO methods which yield the different ADER representations, like ADER--WENO or ADER--DG that can be found in literature. Additionally,  we will not suffer from any other issues like accuracy reduction or similar if using one of the above described space discretization methods.
%Later, in the numerical section \ref{sec:Num}, we apply the Spectral Difference method as a baseline scheme, in order to show the combination of time and space discretisation. 

We consider, thus, the function $F:\R^I\to \R^I$ to be the semi-discrete operator in space for all the degrees of freedom, which is given by the chosen spatial scheme. We can just focus then on the resulting system of ODEs for $\bc : [0,T]\to \R^I$
\begin{equation}\label{eq:scalarODE}
\partial_t \bc + F(\bc) =0.
\end{equation}

Its variational form in time is given by 
\begin{equation}\label{eq:ADERODEL2}
\int_{T^n}  \psi(t)\partial_t \bc(t) dt + \int_{T^n} \psi(t)F(\bc(t))  dt = 0, \quad  \forall \psi: T^n\to \R.
\end{equation}

Now, we replace the unknown with its reconstruction and we choose the test functions to be the same as the basis functions. This results in the following definition of the operator $\L^2: \R^{(M+1 )\times I} \to \R^{(M+1 )\times I} $
\begin{equation}\label{eq:L2_ADER}
\L^2(\bbc ):= \int_{T^n} \bphi(t) \partial_t \bphi(t)^T \bbc dt + \int_{T^n} \bphi(t)  F(\bphi(t)^T\bbc)  dt = 0.
\end{equation}

Integrating by parts yields
\begin{equation}\label{eq:L2_ADER_ibp}
\L^2(\bbc )= \bphi(t^{n+1}) \bphi(t^{n+1})^T \bbc
- \bphi(t^n) \bc(t^n) 
- \int_{T^n}  \partial_t\bphi(t) \bphi(t)^T dt  \bbc
+ \int_{T^n}\bphi(t) F(\bphi(t)^T\bbc ) dt = 0.
\end{equation}

%with the abuse of notation $\bc(t^n)$ to indicate $(\bc(t^n), \dots, \bc(t^n))\in \R^{(M+1)\times I}$.
Now, we replace the integrals by quadratures. The quadrature nodes can coincide (or not) with the ones defining the Lagrange polynomials\footnote{In this work we choose the quadrature nodes as Gauss-Lobatto or Gauss-Legendre nodes, according to the definition of the Lagrange polynomials. However, in many application of ADER, Gauss-Legendre nodes are the typical choice to guarantee the exactness 
of the integration.}. We denote them as $\lbrace t^q_z \rbrace_{z=0}^Z \subset T^n$ and
 with respective weights $\lbrace w_z \rbrace_{z=0}^Z$. We choose the weights, the nodes and the basis functions to be scaled for the interval $[0,1]$, so that we explicitly highlight the presence of $\Delta t=t^{n+1}-t^n$. The integration terms reduce to 
 
 \begin{equation}\label{eq:quadrature}
\begin{aligned}
  \int_{T^n} \partial_t \bphi(t) \bphi(t)^T dt \bbc &
\approx \sum_{z=0}^Z w_z \partial_t \bphi(t^q_z) \bphi(t^q_z)^T\bbc  \\
\int_{T^n}  \bphi(t)F(\bphi(t)^T\bbc)  dt &\approx
\Delta t \sum_{z=0}^{Z} w_z \bphi(t^q_z) F(\bphi(t^q_z)^T\bbc).
\end{aligned}
\end{equation}

Then, the approximation of \eqref{eq:L2_ADER_ibp} yields, for every test function indexed by $m=0,\dots, M$
\begin{equation}\label{eq:System}
 \phi_m(t^{n+1})\bphi(t^{n+1})^T\bbc -
\sum_{z=0}^Z w_z \partial_t \bphi(t^q_z) \bphi(t^q_z)^T\bbc
=\phi_m(t^{n})\bc(t^n) 
- \Delta t\sum_{z=0}^{Z} w_z \bphi(t^q_z) F(\bphi(t^q_z)^T\bbc) .
\end{equation}

This is a system of $M+1$ equations with $M+1$ unknowns for every ODE of the system \eqref{eq:scalarODE}. 
Actually, what is expressed here is nothing more than a classical collocation method, c.f. \cite{wanner1996solving}, or a high order implicit RK method.
The order depends on the used quadrature rule. 
Using, as an example, Gauss--Legendre or Gauss--Lobatto nodes both for the definition of the basis functions and the quadrature nodes results in a high order quadrature formula. One can also use different points for the quadrature and the basis functions, resulting in more varieties of this scheme. 

We can rewrite the system \eqref{eq:System} in a matrix--fashioned way, using the mass matrix $\M \in \R^{(M+1)\times(M+1)}$, given by
\begin{equation}\label{eq:MassmatrixAder}
\M_{m,l} := \phi_m(t^{n+1})\phi_l(t^{n+1})- \sum_{z=0}^Z \partial_t \phi_m(t^q_z) \phi_l(t^q_z) w_z
\end{equation}
and the right--hand side functional $\vec{r}(\bbc):\R^{(M+1)\times I}\to \R^{(M+1)\times I}$, given by

\begin{equation}\label{eq:aderrhs}
\vec{r}(\bbc)_m:=\phi_m(t^{n})\bc(t^n)
- \Delta t\sum_{z=0}^{Z} w_z \phi_m(t^q_z) F(\bphi(t^q_z)^T\bbc), \quad m=0,\dots,M+1.
\end{equation}

We have brought on the right--hand  side  of \eqref{eq:System} the nonlinear terms and the explicit ones, while keeping the linear terms on the left--hand side.
Finally, our system to be solved is given by

\begin{equation}\label{fix:point}
 \M \bbc = \vec{r}(\bbc) \Longleftrightarrow \L^2(\bbc):=\M\bbc-\vec{r}(\bbc)\stackrel{!}{=}0.
\end{equation}

This equation is nothing else than a fixed--point problem. Its solution will give us an $(M+1)$th order accurate solution in time. It cannot be directly solved when nonlinear fluxes are present, but it can be solved 
under certain assumptions with an iterative process in $\bbc$.
Defining the starting guess as $\bbc^{(0)}:=[\bc(t^n),\dots, \bc(t^n) ]^T$, the algorithm proceeds iteratively as follows
\begin{equation}\label{eq:fixpoint_iteration}
   \bbc^{(k)}=\M^{-1} \vec{r}(\bbc^{(k-1)}), \quad k=1,\dots,K.
\end{equation}

Several questions arise automatically for \eqref{eq:fixpoint_iteration},
such as, how is the convergence of the method influenced by the mass matrix $\M$ and, hence, by the node placement?
By re-interpreting the new ADER approach into the DeC framework in section \ref{sec:Relation}, we can answer these and more questions, thanks to the definition of the $\L^2$ operator in \eqref{fix:point}. This serves as a connection to the DeC procedure. 

Before introducing the DeC method, we give the following simple example to get more familiar with the ADER method in the ODE setting.

\begin{example}\label{ex:ADERexample}{\textbf{2$^{nd}$ order ADER method for ODEs}}
Let us demonstrate a concrete example of the methodology described above, considering a simple scalar ODE:

\begin{equation}
      \alpha'(t) + F(\alpha) = 0, 
\end{equation}
with $\alpha:[0,T] \to \mathbb{R}$.

Let us consider the timestep interval $[t^n,t^{n+1}]$, rescaled to $[0,1]$. The time interpolation nodes and the quadrature nodes are given by Gauss-Legendre points (in the interval $[0,1]$) and respective quadrature weights.
\[\ww{t}_q =  \left( t^0_q, t^1_q \right) = \left( t^0, t^1 \right) =  \left( \frac{\sqrt{3}-1}{2\sqrt{3}}, \frac{\sqrt{3}+1}{2\sqrt{3}} \right), \quad \ww{w} = \left(1/2,1/2\right). \]

The time basis is given by Lagrange interpolation polynomials built on the nodes $\vec{t}^q$. 

\[\ww{\phi}(t) = \left( \phi_0(t), \phi_1(t) \right) = \left( \frac{t-t^1}{t^0-t^1}, \frac{t-t^0}{t^1-t^0} \right). \]

Then, the mass matrix is given by
\[\M_{m,l} = \phi_m(1)\phi_l(1) - \phi'_m(t^l) w_l, \quad m, l = 0,1,\]
thanks to the definition of the Lagrange polynomials.
\[ \M =
\begin{pmatrix}
   1   & \frac{\sqrt{3}-1}{2}  \\
    -\frac{\sqrt{3}+1}{2}   & 1
\end{pmatrix}.\]

The right hand side is given by\footnote{Note that the quadrature process is greatly simplified because of the choice of Gauss--Legendre nodes both for the quadrature and the Lagrangian basis functions in \eqref{eq:MassmatrixAder} and \eqref{eq:aderrhs}.}

\[ r(\bbc)_m = \alpha(0) \phi_m(0) - \Delta t F(\alpha(t^m)) w_m, \quad m=0,1. \] 

\[ \vec{r}(\bbc) = \alpha(0)\vec{\phi}(0) -\Delta t
\begin{pmatrix}
    F(\alpha(t^1)) w_1  \\
    F(\alpha(t^2)) w_2.
\end{pmatrix}.\]

Then, the coefficients $\bbc$ are given by

\begin{align*}
\bbc^{(k+1)} &= \M^{-1} \vec{r}( \bbc^{(k)} ).
\end{align*}
Finally, use $\bbc^{(k+1)}$ to reconstruct the solution at the time step $t^{n+1}$:

\begin{align*}
\alpha^{n+1} &= \vec{\phi}(1)^T \bbc^{(k+1)}.
\end{align*}

\end{example}

\subsection{Historical ADER}
In the community, the term ADER is often associated with the historical approach. In order to establish the difference between the modern version of ADER and the historical one, we explain the procedure, as introduced in \cite{titarev2002ader}, highlighting the key differences between the two methods.

Solving the same problem as in the PDE \eqref{eq:scalarPDE}, let us consider $t^n$ as starting time and 
suppose that we have at our disposal the cell averages: $u_i(t^n)$, for $i$ indexing control volumes $V$.

The method consists of three steps:

\begin{enumerate}
\item{reconstruction of point-wise values from cell averages (using some reconstruction function $\mathcal{R}$)
\[ \mathcal{R}(u_{i-k},...,u_{i+l},t^n) = u(x,t^n), \quad x \in V_i, \]
}
\item{solution of the generalized Riemann problem at the cell interfaces,}
\item{evaluation of the intercell flux to be used in the conservative scheme.}
\end{enumerate} 

The main difference is in step two. In order to solve the generalized Riemann problem at the cell interfaces (for all local times $\tau:=t-t^n$), one writes the Taylor expansion of the interface state in time

\begin{equation}
\label{eq:taylorexpADER}
 u(x_{i+1/2},\tau) = u(x_{i+1/2},0^+) + \sum^{m}_{k=1} \left[ \partial^{(k)}_t u(x_{i+1/2}, 0^{+})\right]\frac{\tau^k}{k!}
\end{equation}

Note that the leading term, $u(x_{i+1/2},0^+)$, accounts for interactions with the boundary extrapolated values $u_L(x_{i+1/2})$ and $u_R(x_{i+1/2})$, and it is the Godunov state of the conventional (piece--wise constant data) Riemann problem. Typically, an exact or approximate Riemann solver is used to provide the first term of the approximation. 

The next terms are higher order corrections to the 0th Godunov state. The high order derivatives in time are replaced by derivatives in space by means of the Cauchy-Kovalevskaya procedure.

As in \cite{grptoro}, the space derivatives $ \partial ^{(k)}_x u(x,t)$ of the solution at $(x-x_{i+1/2},\tau) = 0 $ can be evaluated as the Godunov states of the following linearized generalized Riemann Problem:

\begin{equation}\label{eq:RP}
\begin{aligned}
&\partial_t u^{(k)} + A(u(x_{i+1/2},0^+))\partial_x  u^{(k)} = 0, \\
&u^{(k)}(x,0) = \begin{cases}
\partial_x^{(k)} u_L(x_{i+1/2}), \quad x<x_{i+1/2},\\
\partial_x^{(k)} u_R(x_{i+1/2}), \quad x>x_{i+1/2}.
\end{cases}
\end{aligned}
\end{equation}

The initial condition for the Riemann Problem \eqref{eq:RP} above is given by differentiating the high--order reconstruction polynomial with respect to $x$. Having all derivatives in terms of their spatial component, the Taylor expansion \eqref{eq:taylorexpADER} can be evaluated as
\begin{equation}
u(x_{i+1/2},\tau) = a_0 + a_1 \tau + a_2 \tau^2 + ... + a_M \tau^M
\end{equation}
for $a_j$ containing all the constants not depending on $\tau$. This expression approximates the interface state for $0\leq \tau \leq \Delta t$ to $(M+1)$th order of accuracy.

Finally, to evaluate the numerical flux (now in time as well), an appropriate Gaussian rule is used:
\begin{equation*} \hat{F}_{i+1/2} = \sum_{q=0}^{M} F(u(x_{i+1/2}, t^q \Delta t )) \omega_q,
\end{equation*}
where $t^q$, $\omega_q$ are nodes and weights of the quadrature rule, and $M+1$ the number of nodes.
\section{Deferred Correction Methods}\label{sec:originalDeC}

In this section, we focus on the \textbf{deferred correction (DeC)} method which was introduced by Dutt et al.  \cite{dutt2000dec} and then reinterpreted by Abgrall \cite{abgrall2017dec}. It is an explicit, arbitrarily high order method for ODEs  but further extensions of DeC, including implicit, semi-implicit and modified Patankar versions,  can be found  nowadays in the literature \cite{christlieb2010integral,minion2003dec,offner2019arbitrary}.

Since we want to embed the modern ADER approach into this framework, we focus only on the explicit version. Therefore, we use the  notation of DeC introduced by Abgrall in \cite{abgrall2017dec} because, in our opinion, it is more convenient to prove accuracy than in previous works   \cite{dutt2000dec,christlieb2010integral,liu2008strong}.
Actually, Abgrall focuses on DeC as a time integration scheme in the context of finite element methods. In particular, when using continuous Galerkin schemes for the space discretization and applying RK methods, a sparse mass matrix has to be inverted. Through the DeC approach, one can avoid the mass matrix inversion. 

The main core of all the DeC algorithms is the same 
and it is based on the Picard-Lindel\"of theorem in the continuous setting. The theorem states the existence and uniqueness of solutions for ODEs. The classical proof makes use of the so--called Picard iterations to minimize the error and to prove the convergence of the method, which is nothing else than a fixed--point problem. The foundation of DeC relies on mimicking the Picard iterations and the fixed--point iteration process at the discrete level.

Here, we see already some connection between the two approaches. Indeed, the approximation error decreases with several iteration steps. For the description of DeC,
let us consider the system of ODEs as in \eqref{eq:scalarODE}
\begin{equation*}
\bc'(t)+F(\bc)=0,
\end{equation*}
with $\bc:[0,T]\to \R^I$.
 Abgrall
introduces two operators: $\L^1$ and $\L^2$.
The $\L^1$ operator represents a low-order
easy--to--solve numerical scheme, e.g. the explicit Euler method, 
and $\L^2$ is a high-order operator
that can present difficulties in its practical solution, e.g. an implicit RK scheme or a collocation method.
We use here on purpose the $\L^1, \L^2$ nomenclature since we will see that actually 
this is the key point in the connection. The DeC method can be written as a combination of these two operators.

Given a timeinterval $[t^n, t^{n+1}]$ we subdivide  
it into $M$ subintervals  $\lbrace [t^{n,m-1},t^{n,m}]\rbrace_{m=1}^M$,
where $t^{n,0} = t^n$ and $t^{n,M} = t^{n+1}$
and we mimic for every 
subinterval $[t^0, t^m]$ the Picard--Lindel\"of theorem 
for both operators $\L^1$ and $\L^2$. 
We drop the dependency on the timestep $n$ for subtimesteps $t^{n,m}$ 
and substates $\bc^{n,m}$ as denoted in Figure \ref{Fig:Time_interval}.  
\begin{figure}[ht]
	\centering
\begin{tikzpicture}
\draw [thick]   (0,0) -- (10,0) node [right=2mm]{};
% nodes
\fill[black]    (0,0) circle (1mm) node[below=2mm] {$t^n=t^{n,0}=t^0 \,\, \quad$} node[above=2mm] {$\bc^0$}
                            (2,0) circle (0.7mm) node[below=2mm] {$t^{n,1}=t^1$} node[above=2mm] {$\bc^1$}
                            (4,0) circle (0.7mm) node[below=2mm] {}
                            (6,0) circle (0.7mm) node[below=2mm] {$t^{n,m}=t^m$} node[above=2mm] {$\bc^m$}
                            (8,0) circle (0.7mm) node[below=2mm] {}
                            (10,0) circle (1mm) node[below=2mm] {$\qquad t^{n,M}=t^M=t^{n+1}$} node[above=2mm] {$\bc^M$}; 
\end{tikzpicture} \caption{Subtime intervals}\label{Fig:Time_interval}
\end{figure}
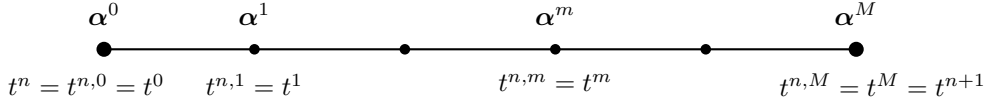

Then, the $\L^2$ operator is given by
\begin{equation}\label{eq:L2operator}
\L^2(\bc^0, \dots, \bc^M) :=
\begin{cases}
 \bc^M-\bc^0 -\int_{t^0}^{t^M} \I_M ( F(\bc^0),\dots,F(\bc^M))
= \bc^M-\bc^0 -\sum_{r=0}^M \int_{t^0}^{t^M} F(\bc^r) \phi_r(s) \diff s\\
\vdots\\
\bc^1-\bc^0 - \int_{t^0}^{t^1} \I_M ( F(\bc^0),\dots,F(\bc^M))
=\bc^1-\bc^0 - \sum_{r=0}^M \int_{t^0}^{t^1} F(\bc^r) \phi_r(s) \diff s
\end{cases}.
\end{equation}
Here, the term $\I_M$  denotes an interpolation polynomial of degree 
$M$ evaluated at the points $\lbrace  t^{r}\rbrace _{r=0}^M$.
In particular,
we use Lagrange polynomials $\lbrace \phi_r \rbrace_{r=0}^M$, 
where $\phi_r(t^{m})=\delta_{r,m}$ and $\sum_{r=0}^M \phi_r(s) \equiv 1$ for any $s\in [0,1]$. 

Using these properties, 
we can actually compute the integral of the interpolants, 
thanks to a quadrature rule in the same points $\lbrace t^m \rbrace_{m=0}^M$ 
with weights $\theta_r^m := \int_{t^n}^{t^{n,m}} \phi_r(s) ds$. 

We can rewrite
\begin{equation}\label{eq:L2}
\L^2(\bc^0, \dots, \bc^M) =
\begin{cases}
\bc^M-\bc^0 - \Delta t\sum_{r=0}^M \theta_r^M F(\bc^r)\\
\vdots\\
\bc^1-\bc^0 - \Delta t\sum_{r=0}^M \theta_r^1 F(\bc^r)
\end{cases}.
\end{equation}
The $\L^2$ operator represents an $(M+1)$th order numerical scheme (collocation method) if set equal
to zero, i.e., $\L^2(\bc^0, \dots, \bc^M)=0$. 
Unfortunately, the resulting scheme is implicit and, 
further, the terms $F$ may be nonlinear.
Because of this, the only $\L^2=0$ formulation 
is not explicit and more efforts have to be  made to solve it.

For this purpose, we introduce a simplification of  the $\L^2$ operator. 
Instead of using a quadrature formula at the points $\lbrace t^m \rbrace_{m=0}^M$ we evaluate 
the integral in equation \eqref{eq:L2operator} applying the left Riemann sum. 
The resulting operator $\L^1$ is given by the forward Euler discretization for each state $\bc^m$ in the timeinterval, i.e.,  
\begin{equation}\label{eq:L1}
\L^1(\bc^0, \dots, \bc^M) :=
\begin{cases}
 \bc^M-\bc^0 - \beta^M \Delta t F(\bc^0) \\
\vdots\\
\bc^1- \bc^0 - \beta^1 \Delta t F(\bc^0)
\end{cases}.
\end{equation}
with coefficients $\beta^m:=\frac{t^m-t^0}{t^M-t^0}$.\\
To simplify the notation and to describe  
DeC, as before, we introduce the vector of states for the variable $\bc$ at all subtimesteps

\begin{align}\label{eq:definition_bbc}
&\bbc :=  (\bc^0, \dots, \bc ^M) \in \R^{M\times I}, \text{ such that }\\
&\L^1(\bbc) := \L^1(\bc^0, \dots, \bc^M) \text{ and } \L^2(\bbc) := \L^2(\bc^0, \dots, \bc^M) .
\end{align}
Now, the DeC algorithm uses a combination of the $\L^1$ and $\L^2$ operators
to provide an iterative procedure.
The aim is to recursively approximate $\bbc^*$, the numerical solution of
the $\L^2(\bbc^*)=0$ scheme, similarly to the Picard iterations in the 
continuous setting. The successive states of the iteration process will be denoted 
by the superscript $(k)$, where $k$ is the iteration index, e.g. $\bbc^{(k)}\in \R^{M\times I}$.
The total number of iterations (also called correction steps) is denoted by $K$.
To describe the procedure, we have to refer to both 
 the $m$-th subtimestep and the $k$-th iteration of the DeC algorithm. We will indicate the variable by $\bc^{m,(k)} \in \R^I$.

Finally, the DeC method can be written as \\

 \centerline{\textbf{DeC Algorithm}}
\begin{equation}\label{DeC_method}
\begin{split}
&\bc^{0,(k)}:=\bc(t^n), \quad k=0,\dots, K,\\
&\bc^{m,(0)}:=\bc(t^n),\quad m=1,\dots, M\\
&\L^1(\bbc^{(k)})=\L^1(\bbc^{(k-1)})-\L^2(\bbc^{(k-1)}) \quad \text{ for }k=1,\dots,K.
\end{split}
\end{equation}
Using the procedure \eqref{DeC_method}, we  need, in particular, as many iterations as the desired  order of accuracy, i.e., $K= M+1$. 

Notice that, in every step, we solve the equations for the unknown variables $\bbc^{(k)}$ which appears only in the $\L^1$ formulation, the operator that can be easily inverted. Conversely, $\L^2$ is only applied to already computed predictions of the solution $\bbc^{(k-1)}$.
Therefore, the scheme \eqref{DeC_method} is completely explicit and arbitrary high order as stated in \cite{abgrall2017dec} with
 the following proposition.
\begin{proposition}\label{DeC_prop}
Let $\L^1$ and $\L^2$ be two operators defined on $\mathbb{R}^{M\times I}$, 
which depend on the discretization scale $\Delta = \Delta t$, such that
\begin{itemize}
\item $\L^1$ is coercive with respect to a norm, i.e., 
$\exists\, \gamma_1 >0$ independent of $\Delta$, such that for any $\bbc,\bbd$
we have that $$\gamma_1||\bbc-\bbd||\leq ||\L^1 (\bbc)-\L^1 (\bbd)||,$$
\item $\L^1 - \L^2$ is Lipschitz with constant $\gamma_2>0$
uniformly with respect to $\Delta$, i.e., for any $\bbc,\bbd$
$$
||(\L^1(\bbc)-\L^2(\bbc))-(\L^1(\bbd)-\L^2(\bbd))||\leq \gamma_2 \Delta ||\bbc-\bbd||.
$$
\end{itemize}
We also assume that there exists a
unique $\bbc^*_\Delta$ such that $\L^2(\bbc^*_\Delta)=0$. 
Then, if $\eta:=\frac{\gamma_2}{\gamma_1}\Delta<1$,
the DeC is converging to $\bbc^*$ and after $k$ iterations
the error $||\bbc^{(k)}-\bbc^*||$ is smaller than $\eta^k||\bbc^{(0)}-\bbc^{*}||$.
\end{proposition}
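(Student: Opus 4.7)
The plan is to run a standard fixed-point contraction argument, using the equation $\L^2(\bbc^*_\Delta)=0$ to inject $\bbc^*_\Delta$ into the iteration identity and then play the coercivity of $\L^1$ against the Lipschitz bound on $\L^1-\L^2$.

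First, I would observe that since $\L^2(\bbc^*_\Delta)=0$, the fixed point satisfies the trivial identity
\begin{equation*}
\L^1(\bbc^*_\Delta) = \L^1(\bbc^*_\Delta) - \L^2(\bbc^*_\Delta),
\end{equation*}
so it is itself a fixed point of the DeC update rule \eqref{DeC_method}. Subtracting this identity from the defining equation of $\bbc^{(k)}$ yields
\begin{equation*}
\L^1(\bbc^{(k)}) - \L^1(\bbc^*_\Delta) = \bigl[\L^1(\bbc^{(k-1)}) - \L^2(\bbc^{(k-1)})\bigr] - \bigl[\L^1(\bbc^*_\Delta) - \L^2(\bbc^*_\Delta)\bigr].
\end{equation*}
This is the key rewriting: the left-hand side is controlled from below by coercivity, while the right-hand side is controlled from above by the Lipschitz hypothesis on $\L^1-\L^2$.

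Next I would apply, on the right, the Lipschitz bound to get $\|\cdot\|\le \gamma_2\Delta\,\|\bbc^{(k-1)}-\bbc^*_\Delta\|$, and on the left the coercivity bound $\gamma_1\|\bbc^{(k)}-\bbc^*_\Delta\|\le\|\L^1(\bbc^{(k)})-\L^1(\bbc^*_\Delta)\|$. Chaining the two inequalities gives the one-step contraction
\begin{equation*}
\|\bbc^{(k)}-\bbc^*_\Delta\| \le \frac{\gamma_2}{\gamma_1}\Delta\,\|\bbc^{(k-1)}-\bbc^*_\Delta\| = \eta\,\|\bbc^{(k-1)}-\bbc^*_\Delta\|.
\end{equation*}
Since $\eta<1$ by hypothesis, a straightforward induction on $k$ yields $\|\bbc^{(k)}-\bbc^*_\Delta\|\le \eta^k\,\|\bbc^{(0)}-\bbc^*_\Delta\|$, which is exactly the claimed geometric rate of convergence, and in particular forces $\bbc^{(k)}\to\bbc^*_\Delta$.

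I do not expect a real obstacle here: the argument is essentially the Banach fixed-point pattern, the only subtlety being to use $\L^2(\bbc^*_\Delta)=0$ to symmetrize the update identity so that coercivity and Lipschitz continuity can be applied cleanly. The one place where I would be slightly careful is that the coercivity and Lipschitz constants are uniform in $\Delta$, which is what makes $\eta$ a genuine contraction factor (and will later be what permits taking $K=M+1$ iterations to gain one order per step when $\Delta\to 0$); I would explicitly note this uniformity when invoking the two hypotheses.
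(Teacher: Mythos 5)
Your argument is correct and is essentially the same one the paper uses: it defers the proof of this proposition to the cited references, but reproduces exactly this reasoning in the proof of Theorem~\ref{the:theorem}, where the identity $\L^1(\bbc^*)=\L^1(\bbc^*)-\L^2(\bbc^*)$ is combined with coercivity on the left and the Lipschitz bound on the right to obtain the one-step contraction with factor $\eta=\tfrac{\gamma_2}{\gamma_1}\Delta$ and then iterated. Nothing is missing; your remark about the uniformity of the constants in $\Delta$ is exactly the point that later allows the order count $\min(M+1,K)$.
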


Proofs of this proposition and of the hypotheses of the proposition for operators $\L^1$ and $\L^2$ such as \eqref{eq:L1} and \eqref{eq:L2} can be found in \cite{abgrall2017dec,abgrall2018asymptotic,offner2019arbitrary}.

The condition for $\eta$ comes actually from the fixed--point theorem and has to be guaranteed
that the iterative process converges.
Now, before we focus on the relation between DeC and ADER, we give the following remark.

\begin{remark}

\begin{itemize}
\item In the $\L^1$ operator, one can use higher order time integration 
methods than the explicit Euler method. In principle,
this increases the convergence rate in the iteration 
process. However, an accuracy drop down can be 
observed in some cases. This is due to the fact
that the smoothness of the error is not guaranteed anymore,
c.f. \cite{christlieb2010integral}.

\item 
In our description of DeC both endpoints are included.
However, Gauss-Legendre nodes (like in example \ref{ex:ADERexample})
can be used. In this case, the approximation at the endpoint is done via extrapolation, see \cite{dutt2000dec}.
\item 
Finally, any DeC method can be interpreted as a RK scheme  \cite{christlieb2010integral}. 
The main difference between RK and  DeC is that the latter 
gives a general approach to the time discretization and does
not require a specification of the coefficients for every order of accuracy. 
\end{itemize}

\end{remark}

\section{Relation between DeC and ADER}\label{sec:Relation}

What we saw up to now was a repetition of the DeC and ADER approaches.  Both are based on an iterative procedure and mainly the foundation is given by the same fixed--point iteration method. 
In the following, we point out the relation between these two methods, namely, we show how ADER can be expressed as DeC and \textit{vice versa}. 
This relation can be used to prove  a new theoretical result for the ADER algorithm. 
The number of iterations needed to the Picard process can be chosen equal to the accuracy order we aim to reach. 
This result can be used in a general setting, providing few hypotheses on the operators, extending the result of \cite{hjackson2017}.

\subsection{ADER as DeC} 
First, we start to show how the modern ADER can be put into the 
DeC framework which we have described in section 
 \ref{sec:originalDeC}.
Therefore, let us rewrite
the $\L^2$ operator from section \ref{sec:ader}.
It is given in equation \eqref{eq:L2_ADER_ibp} and \eqref{fix:point}.
It is 
\begin{equation*}
 \L^2(\bbc):=\M\bbc-\vec{r}(\bbc),
\end{equation*}
where $\M$ is the previously defined invertible mass matrix. 
For the DeC algorithm \eqref{DeC_method},
we need further a low--order explicit operator. 
For the ADER, we choose the same low order operator of the DeC, namely, 
\begin{equation}\label{eq:L1ADER}
 \L^1(\bbc):= \M\bbc-\vec{r}(\bc(t^n)).
\end{equation}
Actually, we have to mention that this operator is not really unique and can be defined in different ways, since all the information is already included in the $\L^2$ operator for the ADER--DeC approach (see remark \ref{rem:l1}). Nevertheless, the choice of \eqref{eq:L1ADER} is useful for the hypotheses of the proposition \ref{DeC_prop}, because, in this way, the difference of the two operators will be Lipschitz continuous.

Then, we obtain the ADER-DeC algorithm:
\begin{equation*}
 \L^1(\bbc^{(k)})= \L^1(\bbc^{(k-1)})-\L^2(\bbc^{(k-1)}),\qquad k=1,\dots, K,
\end{equation*}
defining $\bc^{(k),0}=\bc(t^n)$, $\forall k $. Hence, we can explicitly write it as 
\begin{align*}
 &  \M \bbc^{(k+1)} -\vec{r}(\bc^{(k+1)}(t^n))- \M \bbc^{(k)} +\vec{r}(\bc^{(k)}(t^n))
   +\M \bbc^{(k)} -\vec{r}(\bbc^{(k)}) =0\\
&    \M \bbc^{(k+1)} -\vec{r}(\bbc^{(k)}) =0.
\end{align*}
which is nothing more than the discrete fixed--point problem in equation \eqref{fix:point}.

\begin{remark}
\label{rem:l1}
The operator $\L^2$ already comprises the information needed in the fixed--point iteration. Nevertheless, the $\L^1$ operator serves us to easily prove the convergence up to the required order of accuracy of the process.

Finally, we like to mention that one can also define the operators as
 \begin{equation}
    \begin{cases}
        \L^1(\bbc):= \bbc -\M ^{-1}\vec{r}(\bc(t^n)),\\
        \L^2(\bbc):= \bbc -\M ^{-1}\vec{r}(\bbc).
    \end{cases}
\end{equation} 
and obtain an analog result. We use this expression when demonstrating the accuracy property in subsection \ref{subsec:accuracy}. 
\end{remark}

\subsection{DeC as ADER}\label{eq:DeC_ader}

Here, we will show that the DeC scheme can be expressed as an ADER method. The essential difference is the choice of appropriate basis functions. This is mainly related to the definition of the $\L^2$ operator in the DeC framework.
If we rewrite  \eqref{eq:L2operator}
\begin{equation*}\label{eq:L2op}
\L^2(\bbc) :=
\begin{cases}
 \bc^M-\bc^0 -\int_{t^0}^{t^M} \I_M ( F(\bc^0),\dots,F(\bc^M))
= \bc^M-\bc^0 -\sum_{r=0}^M \int_{t^0}^{t^M} F(\bc^r) \phi_r(t) \diff t\\
\vdots\\
\bc^1-\bc^0 - \int_{t^0}^{t^1} \I_M ( F(\bc^0),\dots,F(\bc^M))
=\bc^1-\bc^0 - \sum_{r=0}^M \int_{t^0}^{t^1} F(\bc^r) \phi_r(t) \diff t
\end{cases}.
\end{equation*}
and focus on the $m$-th line, which reads
\begin{equation*}
 \bc^m-\bc^0-\sum_{r=0}^M F(\bc^r) \int_{t^0}^{t^m} \phi_r(t) \diff t=0,
\end{equation*}
we can rewrite the $\L^2$ operator in the following form 
\begin{equation}\label{eq:int_L2}
\cc{m}(t^m)\bc^m-\cc{m}(t_0)\bc^0-\sum_{r=0}^M  F(\bc^r) \int_{t^0}^{t^M} \cc{m}(t) \phi_r(t) \diff t=0,
\end{equation}
where $\cc{m}$ is the characteristic function in the interval $[t^0,t^m]$, i.\,e.,
\begin{equation}\label{eq:characteristic}
 \cc{m}(t)=\begin{cases}
         1, \qquad \text{if}  &t\in [t^0,t^m],\\
         0, \qquad \text{else}.
        \end{cases}
\end{equation}
Therefore, we can actually include the integration also for the first two terms of 
equation \eqref{eq:int_L2}, resulting in 
\begin{align}\label{eq:L_2_bases}
 &\int_{t^0}^{t^M} \cc{m}(t) \partial_t \left(\bc(t)\right) \diff t-
 \sum_{r=0}^M  F(\bc^r) \int_{t^0}^{t^M} \cc{m}(t) \phi_m(t) \diff t=0,\\
 &\int_{T^n} \psi_{m}(t) \partial_t \bc(t) \diff t- \int_{T^n} \psi_{m}(t)
 F(\bc(t)) \diff t=0, \label{eq:DeCL2asADER}
\end{align}
where $\psi_m(t)=\cc{m}(t)$ are the chosen test functions.

If we compare \eqref{eq:DeCL2asADER} with the beginning ADER formulation \eqref{eq:ADERODEL2} and \eqref{eq:L2_ADER}, we notice that they differ just in the choice of the test functions. If for the ADER approach we chose test functions to be the basis functions, in the DeC method we considered different test functions.

Inserting this in the DeC algorithm, we obtain 
\begin{equation*}
 \cc{m}(t_m)\bc^{m,(k)}=\cc{m}(t_0)\bc^0-
 \sum_{r=0}^M  F(\bc^{r,(k-1)}) \int_{t^0}^{t^M} \cc{m}(t) \phi_m(t) \diff t.
\end{equation*}
If we compare this equation with the fixed--point iteration of ADER \eqref{eq:fixpoint_iteration}, we can
observe that the discrete methods differ mainly in the mass matrix.
However, the difference in the test functions leads to different processes in the construction of the iterative matrices. 
In DeC, one cannot use integration by part in time, because the test functions are discontinuous.
This is not a problem, since the basis functions in time are $\mathcal{C}^1([t^n,t^{n+1}])$ and the derivatives can be applied directly there.  By changing the test function also in the DeC approach, we can 
 use the technique described in section \ref{sec:ader}, resulting in the 
 ADER formulation.
 \begin{conclusion}\label{eq:con}
 ADER is equivalent to DeC and \textit{vice versa}, up to few details. The major difference between the two approaches presented and used in \cite{dumbser2008unified} and \cite{abgrall2017dec} is the choice of the test functions and the resulting iteration matrix.
 \end{conclusion}

\subsection{On the Accuracy of ADER}\label{subsec:accuracy}

Knowing conclusion  \ref{eq:con}, we can finally use the DeC framework 
to prove the order condition for the ADER methods. 
Therefore, we redefine for simplicity the $\L^1$ and $\L^2$ operators, resulting in
\begin{equation}\label{eq:operators}
    \begin{cases}
        \L^1(\bbc):= \bbc -\M^{-1}\vec{r}(\bc(t^n))\\
        \L^2(\bbc):= \bbc-\M^{-1}\vec{r}(\bbc).
    \end{cases}
\end{equation} 
Following the approach from \cite{abgrall2017dec}, we
need to show that 
\begin{enumerate}[label=\textbf{C.\arabic*}]
    \item \label{item:coercivity} $\L^1$ is coercive 
    \item \label{item:lipschitz} $\L^1-\L^2$ is Lipschitz continuous with constant $\beta \Delta$ 
    \item \label{item:existence} There exists a unique solution of $\L^2$, i.e.,  $\L^2(\bbc^*) = 0$
\end{enumerate}
in order to apply the DeC Theorem.

The next proposition shows condition \ref{item:coercivity}, i.\,e., the coercivity of the operator $\L^1$. 
 
\begin{proposition}[Coercivity of $\L^1$]
Given any $\bbc, \bbd$, such that the explicit data coincides $\bc(t^n)=\bd(t^n)$, there exists a positive $C_0$ such that the 
operator $\L^1$ fulfills 
\begin{equation*}
\norm{\L_1(\bbc)-\L_1(\bbd) } \geq C_0  \norm{\bbc-\bbd}.
\end{equation*}
\end{proposition}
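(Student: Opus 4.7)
The plan is to do a direct computation, since with the operators redefined in \eqref{eq:operators}, the coercivity of $\L^1$ essentially falls out immediately. Recall that
\begin{equation*}
\L^1(\bbc) = \bbc - \M^{-1}\vec{r}(\bc(t^n)),
\end{equation*}
so the only $\bbc$-dependence on the right-hand side is through the identity term; the piece $\M^{-1}\vec{r}(\bc(t^n))$ depends exclusively on the initial datum $\bc(t^n)$ at the beginning of the time slab, not on the full vector of subtimestep states.

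The main step, then, is to subtract $\L^1(\bbc)$ and $\L^1(\bbd)$ and observe that under the hypothesis $\bc(t^n) = \bd(t^n)$, the two explicit right-hand-side terms are identical. Concretely, I would write
\begin{equation*}
\L^1(\bbc) - \L^1(\bbd) = (\bbc - \bbd) - \M^{-1}\bigl(\vec{r}(\bc(t^n)) - \vec{r}(\bd(t^n))\bigr) = \bbc - \bbd,
\end{equation*}
where the second equality uses precisely the assumption that the explicit data coincide. Taking norms of both sides yields $\norm{\L^1(\bbc) - \L^1(\bbd)} = \norm{\bbc - \bbd}$, which gives coercivity with the sharp constant $C_0 = 1$, independent of $\Delta t$ and of the choice of basis functions, quadrature nodes, or mass matrix $\M$.

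There is really no obstacle here; the work has been absorbed into the clever choice of operators in equation \eqref{eq:operators}, where the mass-matrix inversion is built into $\L^1$ and $\L^2$ from the start. This is exactly why Remark~\ref{rem:l1} flagged this alternative formulation as convenient for the accuracy argument: the $\L^1$ part is structurally trivial to bound from below, and all the nontrivial ADER-specific content (mass matrix, quadrature, basis choice) is pushed into $\L^1 - \L^2$, where it will appear in the Lipschitz estimate \ref{item:lipschitz} that presumably follows. I would close by remarking that the result is independent of the chosen norm on $\mathbb{R}^{(M+1)\times I}$, since $C_0 = 1$ works uniformly.
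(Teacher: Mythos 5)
Your proposal is correct and follows exactly the paper's own argument: using the form $\L^1(\bbc)=\bbc-\M^{-1}\vec{r}(\bc(t^n))$ from \eqref{eq:operators}, the difference $\L^1(\bbc)-\L^1(\bbd)$ collapses to $\bbc-\bbd$ once the explicit data coincide, giving $C_0=1$. The only (cosmetic) divergence is that the paper's proof cites the definition \eqref{eq:L1ADER} while actually computing with \eqref{eq:operators}, a discrepancy you implicitly resolve in the right direction.
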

\begin{proof}
We remind that the beginning states coincide for all variables, i.e., $\bc(t^n)=\bd(t^n)$.
Consider any norm $\norm{ \cdot }$, from the definition \eqref{eq:L1ADER} we have
\begin{align*}
\norm{\L^1(\bbc)-\L^1(\bbd) }=\norm{\bbc -\bbd}.
\end{align*}
Therefore, we prove the statement with $C_0=1$. 
\end{proof}

With the following proposition, we prove condition \ref{item:lipschitz}, the Lipschitz continuity of the difference of operators.
\begin{proposition}[Lipschitz continuity of the $\L^1-\L^2$ operator]
Let  $\bbc,\;\bbd$ be in $\R^{M\times I}$. Then, the operator $\L^1-\L^2$ 
is Lipschitz continuous with constant $C_A:=\Delta t \beta$, i.\,e.,
\begin{equation}
 \norm{(\L^1(\bbc)-\L^2(\bbc))-\left(\L^1(\bbd)-\L^2(\bbd)\right)}\leq C_A  \norm{\bbc-\bbd} \\
\end{equation}
where $\beta$ is independent of $\Delta t$.
\end{proposition}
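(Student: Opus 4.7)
The plan is to compute the difference $(\L^1(\bbc) - \L^2(\bbc)) - (\L^1(\bbd) - \L^2(\bbd))$ directly from \eqref{eq:operators}. From the definitions, $\L^1(\bbc) - \L^2(\bbc) = \M^{-1}\vec{r}(\bbc) - \M^{-1}\vec{r}(\bc(t^n))$, and the second term depends only on the initial datum of the time step, which is common to $\bbc$ and $\bbd$ under the standing coercivity hypothesis $\bc(t^n)=\bd(t^n)$. Hence this term cancels in the subtraction and
$$(\L^1(\bbc)-\L^2(\bbc)) - (\L^1(\bbd)-\L^2(\bbd)) = \M^{-1}\bigl[\vec{r}(\bbc) - \vec{r}(\bbd)\bigr].$$

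Next, I would inspect $\vec{r}(\bbc) - \vec{r}(\bbd)$ component-wise using \eqref{eq:aderrhs}. The contribution $\phi_m(t^n)\bc(t^n)$ is identical in both arguments and drops out; what survives carries the explicit $\Delta t$ factor that is the source of the sought $\Delta t$-smallness, namely
$$\vec{r}(\bbc)_m - \vec{r}(\bbd)_m = -\Delta t \sum_{z=0}^{Z} w_z \phi_m(t^q_z)\bigl[F(\bphi(t^q_z)^T\bbc) - F(\bphi(t^q_z)^T\bbd)\bigr].$$

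Then I would invoke the (local) Lipschitz continuity of $F$ with constant $L_F$, which is the standing well-posedness hypothesis for \eqref{eq:scalarODE}, to obtain $\|F(\bphi(t^q_z)^T\bbc) - F(\bphi(t^q_z)^T\bbd)\| \le L_F\,\|\bphi(t^q_z)\|\,\|\bbc-\bbd\|$ in a compatible product norm on $\R^{(M+1)\times I}$. Summing over $z$, applying $\M^{-1}$, and gathering constants gives
$$\bigl\|\M^{-1}[\vec{r}(\bbc)-\vec{r}(\bbd)]\bigr\| \;\le\; \Delta t\,\beta\,\|\bbc-\bbd\|, \qquad \beta := L_F\,\|\M^{-1}\|\sum_{z=0}^{Z} w_z\|\bphi(t^q_z)\|^2.$$
The crucial observation is that $\beta$ does not depend on $\Delta t$: the quadrature nodes and weights, the basis functions, and the mass matrix $\M$ in \eqref{eq:MassmatrixAder} are all defined on the reference interval $[0,1]$, with $\Delta t$ factored out explicitly in \eqref{eq:quadrature}; so $\|\M^{-1}\|$ and the quadrature sum are purely geometric constants of the scheme.

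The main obstacle I anticipate is bookkeeping rather than conceptual: one must choose matching norms on $\R^{(M+1)\times I}$ so that $\|\bphi(t^q_z)^T\bbc\|\le \|\bphi(t^q_z)\|\,\|\bbc\|$ holds uniformly and so that the factor $\sum_z w_z\|\bphi(t^q_z)\|^2$ remains a harmless $\Delta t$-independent constant depending only on the chosen node family (equispaced, Gauss--Lobatto, or Gauss--Legendre). A minor subtlety is whether $\bc(t^n)$ appearing inside $\L^1$ should be read as an external initial datum or as the first block of $\bbc$; under either reading, the coercivity assumption $\bc(t^n)=\bd(t^n)$ eliminates that term from the difference, and the estimate above is the desired Lipschitz bound with $C_A=\Delta t\,\beta$.
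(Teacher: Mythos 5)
Your proposal is correct and follows essentially the same route as the paper's own proof: cancel the $\vec{r}(\bc(t^n))$ terms using $\bc(t^n)=\bd(t^n)$, reduce the difference to $\M^{-1}[\vec{r}(\bbc)-\vec{r}(\bbd)]$, extract the explicit $\Delta t$ factor from \eqref{eq:aderrhs}, and bound the remainder via the Lipschitz continuity of $F$, the boundedness of the reference-interval quadrature data, and the $\Delta t$-independence of $\norm{\M^{-1}}$. Your version is if anything slightly more explicit about the constant $\beta$ and the choice of compatible norms than the paper's.
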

\begin{proof}
First, we note that the first stages 
coincide for $\bbc$ and $\bbd$, i.e., $\bc(t^n)=\bd(t^n)$.
Then, we get
\begin{align*}
&\norm{\left(\L^1(\bbc)-\L^2(\bbc)\right)-\left(\L^1(\bbd)-\L^2(\bbd)\right)} \\
= & \norm{\bbc -\M^{-1}\vec{r}(\bc(t^n)) -\bbc +\M^{-1} \vec{r}(\bbc)- \bbd +\M^{-1} \vec{r}(\bd(t^n)) +\bbd -\M^{-1} \vec{r}(\bbd) }   \\
=& \norm{ \M^{-1} \left( \vec{r}(\bbc)-\vec{r}(\bbd) \right)}\\
\leq & \norm{ \M^{-1}}  \norm{ \vec{r}(\bbc)-\vec{r}(\bbd)},
\end{align*}
where we have just used the definition of the operators and basic linear algebra properties. 
Now, we have to exploit the structure of the mass matrix \eqref{eq:MassmatrixAder} and of the right--hand side \eqref{eq:aderrhs}. 
Recalling again that $\bc(t^n)=\bd(t^n)$, the difference of the two right--hand sides can be written, for every $m = 1, ..., M$, as
\begin{align*}
&\vec{\br}(\bbc)_m-\vec{\br}(\bbd)_m \\
=& \phi_m(t^{n})\bc(t^n)
- \Delta t\sum_{z=0}^{Z} w_z \phi_m(t^q_z) F(\bphi(t^q_z)^T\bbc) -\phi_m(t^{n})\bd(t^n)
+\Delta t\sum_{z=0}^{Z} w_z \phi_m(t^q_z) F(\bphi(t^q_z)^T\bbd)\\
=&\Delta t \sum_{z=0}^{Z} w_z \phi_m(t^q_z) \left( F(\bphi(t^q_z)^T\bbd)-F(\bphi(t^q_z)^T\bbc) \right),
\end{align*}
and using the boundedness of the basis functions $\phi_m$ and of the weights $w_z$ and the Lipschitz continuity of the function $F$, we obtain that
\begin{align}
\norm{\vec{\br}(\bbc)-\vec{\br}(\bbd)} \leq \Delta t C_r \norm{\bbc - \bbd}.
\end{align}

Then, the mass matrix is constant and invertible for the common points distributions, hence, the norm of its inverse does not depend on $\Delta t$ and can be bounded by a coefficient $C_M\in \R^+$.
Overall, we can write that
\begin{align}
&\norm{\left(\L^1(\bbc)-\L^2(\bbc)\right)-\left(\L^1(\bbd)-\L^2(\bbd)\right)} \leq  \Delta t C_M C_r \norm{\bbc - \bbd} ,
\end{align}
proving the statement.
\end{proof}

\begin{theorem}[Convergence of ADER - DeC]\label{the:theorem}
The two operators as defined in \eqref{eq:operators} and used in the DeC algorithm \eqref{DeC_method} 
gives an approximate solution with order of accuracy equal to $\min(M + 1, K)$.
\end{theorem}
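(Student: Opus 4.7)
The overall strategy is to invoke Proposition \ref{DeC_prop} with the operators defined in \eqref{eq:operators}. The two previous propositions in the subsection have already verified conditions \ref{item:coercivity} (coercivity of $\L^1$, with $\gamma_1 = 1$) and \ref{item:lipschitz} (Lipschitz continuity of $\L^1 - \L^2$ with constant $C_A = C_M C_r \Delta t$), so the remaining analytical step is condition \ref{item:existence}: the existence and uniqueness of $\bbc^*$ with $\L^2(\bbc^*) = 0$. I would argue this by a standard Banach fixed-point argument applied to the map $\bbc \mapsto \M^{-1} \vec{r}(\bbc)$: the Lipschitz estimate in the previous proposition shows this map is a contraction of constant $C_M C_r \Delta t < 1$ for sufficiently small $\Delta t$, and the existence/uniqueness of $\bbc^*$ on a ball around the initial data follows, noting also that $\bbc^*$ coincides at the collocation nodes with the solution of an $(M+1)$-order implicit collocation method, whose well-posedness for small $\Delta t$ is classical.

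With the three hypotheses in hand, Proposition \ref{DeC_prop} directly gives, for $\eta := C_A/C_0 = C_M C_r \Delta t$,
\begin{equation*}
\norm{\bbc^{(K)} - \bbc^*} \;\le\; \eta^K \, \norm{\bbc^{(0)} - \bbc^*} \;=\; O(\Delta t)^K \cdot \norm{\bbc^{(0)} - \bbc^*}.
\end{equation*}
Since the initial guess is $\bbc^{(0)} = (\bc(t^n), \dots, \bc(t^n))^T$, whereas the exact ODE trajectory (and hence $\bbc^*$, up to an $O(\Delta t^{M+1})$ correction) differs from $\bc(t^n)$ by $O(\Delta t)$ on each substate, one obtains $\norm{\bbc^{(0)} - \bbc^*} = O(\Delta t)$. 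Hence
\begin{equation*}
\norm{\bbc^{(K)} - \bbc^*} = O(\Delta t^{K+1}).
\end{equation*}

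The second ingredient is the accuracy of the fixed point itself. Since $\L^2(\bbc^*)=0$ is the collocation method associated to the $M+1$ Lagrange nodes $\{t^m\}_{m=0}^M$ and to the quadrature rule used in \eqref{eq:quadrature}, standard collocation theory (see e.g.\ \cite{wanner1996solving}) yields
\begin{equation*}
\norm{\bbc^* - \bbc_{\mathrm{exact}}} = O(\Delta t^{M+1}),
\end{equation*}
where $\bbc_{\mathrm{exact}}$ denotes the vector of true ODE values at the subtimesteps. I would combine the two bounds by the triangle inequality,
\begin{equation*}
\norm{\bbc^{(K)} - \bbc_{\mathrm{exact}}} \;\le\; \norm{\bbc^{(K)} - \bbc^*} + \norm{\bbc^* - \bbc_{\mathrm{exact}}} \;=\; O\!\left(\Delta t^{\min(K+1,\, M+1)}\right),
\end{equation*}
which, interpreted as a local truncation error over a single time step, corresponds to a global order of accuracy of $\min(K, M+1)$, as claimed.

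The routine part is assembling the triangle inequality once the three DeC hypotheses are in place; the main obstacle is the clean justification of condition \ref{item:existence} for the collocation problem together with a sharp estimate on $\norm{\bbc^{(0)} - \bbc^*}$ to obtain the correct exponent in the contraction bound. I would handle both by a Banach contraction argument on a suitable neighborhood of $\bc(t^n)$, which simultaneously yields existence, uniqueness, and the $O(\Delta t)$ bound on $\norm{\bbc^{(0)} - \bbc^*}$.
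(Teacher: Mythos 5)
Your proposal follows essentially the same route as the paper: iterate the coercivity/Lipschitz contraction estimate $K$ times to get $\norm{\bbc^{(K)}-\bbc^*}=O(\Delta t^K)$ (the paper does not use the extra $O(\Delta t)$ gain from $\norm{\bbc^{(0)}-\bbc^*}$), then combine with the $O(\Delta t^{M+1})$ collocation accuracy of $\bbc^*$ by the triangle inequality. The only substantive addition is your Banach fixed-point justification of condition \ref{item:existence}, which the paper leaves as an assumption inherited from Proposition \ref{DeC_prop}; this is a reasonable and welcome supplement, not a different method.
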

\begin{proof}
 Let us denote by $\bbc^*$ the solution of $\L^2(\bbc^*)=0$. We obviously have 
 \begin{equation*}
   \L^1(\bbc^*)=\L^1(\bbc^*)-\L^2(\bbc^*), 
 \end{equation*}
so that with the coercivity of the $\L^1$ operator 
\begin{align}
\label{eq:convADERproof:coerci} \norm{\bbc^{(k)} -\bbc^{*}} &\leq C_0 \norm{\L^1(\bbc^{(k)}) -\L^1(\bbc^*)}\\
 &= \norm{ \L^1(\bbc^{(k-1)})-\L^2(\bbc^{(k-1)}) - \L^1(\bbc^*) +\L^2(\bbc^*)}\\
\label{eq:convADERproof:lip} &\leq \beta \Delta t \norm{\bbc^{(k-1)}-\bbc^*},
\end{align}
where, in \eqref{eq:convADERproof:coerci} we have used the condition \ref{item:coercivity} on the coercivity of $\L^1$ and in \eqref{eq:convADERproof:lip} we have applied condition \ref{item:lipschitz} on the Lipschitz continuity of the operator $\L^1-\L^2$.
This implies that after each iteration step we obtain one order of accuracy more than in the previous 
iteration. After $K$ iterations, we finally get 
\begin{equation*}
 \norm{\bbc^{(K)} -\bbc^{*}} \leq \beta^K\Delta t^K \norm{\bc(t^n)-\bbc^*}.
\end{equation*}
Moreover, we know that $\bbc^*$ is an approximation of order $M+1$ of 
the exact solution $\bbc^{ex}$. So, overall we get
\begin{equation*}
 \norm{\bbc^{(K)} -\bbc^{ex}}\leq \norm{\bbc^{*} -\bbc^{ex}}+(\beta \Delta t)^K \norm{\bbc^{*} -\bbc^{(0)}}
 \leq C^*\left(\Delta t^{M+1}+(\beta \Delta t)^K \right)
\end{equation*}
which proves the statement of the theorem.
\end{proof}

\begin{remark}
In our study, we considered equidistant, Gauss--Lobatto and Gauss--Legendre nodes. For these types of nodes, we can guarantee 
that the mass matrix $\M$ has full rank and, hence, it is invertible. 
Questions concerning the condition numbers of these matrices and related topics will be part of future research. 
\end{remark}

\subsection{Beyond Explicitness}\label{sub_stiff}
For many problems of interest stiff source terms play an important role in the systems of equation, both in ODE and PDE cases. 
Some ADER \cite{dumbser2007FVStiff} and DeC \cite{abgrall2018asymptotic} schemes have been presented in order to handle stiff source terms in PDEs.
The key idea of these schemes is to treat implicitly the source term in the iteration process, via linearization when it is not directly invertible. 
Consider the ODE
\begin{equation}
	\frac{d}{dt} \bc +F(\bc) +S(\bc)=0 
\end{equation}
where $S:\R^I\to\R^I$ is a source term, possibly stiff, that we cannot resolve explicitly. The basic idea of \cite{abgrall2018asymptotic} is to modify the $\L^1$ operator in the DeC formulation, by considering the following discretization
\begin{equation}
	\L^1(\bbc)^m:= \bc^m-\bc^0  +\beta^m \Delta t \left( F(\bc^0) + S(\bc^m) \right),
\end{equation}
which can be further simplified in case of nonlinear terms with another first order approximation 
\begin{equation}\label{eq:L1_Stiff}
	\L^1(\bbc)^m:= \bc^m-\bc^0  +\beta^m \Delta t \left( F(\bc^0) + \partial_{\bc} S(\bc^0) \cdot \bc^m \right).
\end{equation}
The advantage of having the equations for all the sub-timesteps is twofold. First of all, the Jacobian of the stiff term does not interact with the mass matrix of the sub-timesteps, moreover, the DeC procedure still leads to a very clean form, as follows, for every $m =1,\dots, M$
\begin{align}
	\L^1(\bbc^{(k+1)})^m_i&=\L^1(\bbc^{(k)})^m_i-\L^2(\bbc^{(k)})^m_i,\\
	(\texttt{Id}_i^j+\beta^m \Delta t\partial_{\alpha_j}S_i(\bc(t^n))) \alpha_j^{m,(k+1)} &= (\texttt{Id}_i^j+\beta^m \Delta t\partial_{\alpha_j}S_i(\bc(t^n))) \alpha_j^{m,(k)} - \L^2(\bbc^{(k)})_i^m, \\
	\alpha_j^{m,(k+1)} &=  \alpha_j^{m,(k)} - (\texttt{J}^{-1})^i_j \L^2(\bbc^{(k+1)})_i^m,
\end{align}
where $\mathtt{J}_i^j:=(\texttt{Id}_i^j+\beta^m \Delta t\partial_{\alpha_j}S_i(\bc(t^n))) $ and $\mathtt{Id}$ is the identity matrix. Here we used Einstein's notation and every time an index is repeated once as superscript and once as subscript in a product, a summation is implied. In particular, we used $i,j=1,\dots, I$.

Analogously, in ADER community a similar idea was applied \cite{dumbser2007FVStiff}. It can be rewritten in the DeC formulation as follows. Let
\begin{equation}
	\L^1(\bbc):= \M \bbc - r(\bc(t^n)) +s(\bbc),
\end{equation}
where $s$ is defined as
\begin{align}
	&s(\bbc)_{m,i}= \Delta t \sum_{z=0}^Z w_z \phi_m(t^q_z) \phi_l(t^q_z) \partial_{\alpha_j} S_i(\bc(t^n)) \alpha_j^l =:  \S_{mil}^{j}\alpha_j^l  \label{eq:sourceAderJacobian}
\end{align}
when $S$ is nonlinear, where $i,j$ are the indexes of the constituents or of the equations and $m,l$ are the indexes of the basis function referring to the $M$ subtimesteps. As before, we used Einstein's notation. 
This is actually what one obtains using the Newton--Rapshon method on the $\L^2$ operator as prescribed in \cite{dumbser2007FVStiff}. Indeed, $\L^1$ comprises all the information of the Jacobian of the stiff part of the $\L^2$ operator, which is what they want to solve.

The final ADER iterative procedure sums up to
\begin{align}
	\L^1(\bbc^{(k+1)})& = \L^1(\bbc^{(k)}) -\L^2(\bbc^{(k)}), \\
	(\M_{ml}\textrm{Id}^j_i+\S_{mil}^j) \alpha^{l,(k+1)}_j \!\!\! -\vec{r}(\bc(t^n))  &=(\M_{ml}\textrm{Id}^j_i+\S_{mil}^j) \alpha^{l,(k)}_j \!\!\! -\vec{r}(\bc(t^n)) -\M_{ml}\textrm{Id}^j_i  \alpha^{l,(k)}_j \!\!\! +\vec{r}_{mi}(\bbc^{(k)})),\\
	\alpha^{l,(k+1)}_j &= \alpha^{l,(k)}_j -(\mathtt{J}^{-1})_j^{mli} \L^2(\bbc^{(k)})_{mi},
\end{align}
where $\mathtt{J}^j_{mli}:= \M_{ml}\textrm{Id}^j_i+\S_{mil}^j$.
Even if the two methods seems very similar in the building process, the implementation and the results are particularly different. In the ADER procedure the mass matrix in time is mixed with the Jacobian of the stiff part, which leads to the inversion of a matrix $(I\times M) \times (I\times M)$ at each time step, while DeC needs just the inversion of a matrix $I\times I$.
This extra effort is rewarded with extra accuracy in the solutions of the implicit ADER (IMADER) with respect to the implicit DeC (IMDeC), see Section \ref{sec:StiffSimulations}. 
We can heuristically explain this behavior with the fact that in the explicit processes, most of the terms in the $\L^1$ operators cancel out, while in the implicit versions they play a more important role. 
The fact that IMDeC considers these terms in a very simplified first order manner degrade the accuracy of the solutions, while IMADER performs a more accurate integration in time for the $\L^1$ operator, obtaining a more coupled mass matrix. 
Heuristically speaking, IMADER is closer to a pure implicit schemes than IMDeC. 
In future research, we will further analyze this behavior. Especially, a comparison 
between the different interpretations of IMADER and IMDeC and their use as time integration schemes for PDEs with stiff source terms is desirable. This yields us straight to the next subsection.   
\subsection{Beyond ODEs} \label{sec:beyondODE}

The ADER method has been introduced for hyperbolic problems and the DeC method is also used in the hyperbolic community. In this section, we shortly repeat the DeC and ADER variations and their applications to PDEs. Here, we focus on three different versions: 
\begin{enumerate}
\item DeC - residual distribution  schemes
\item ADER space-time discontinuous Galerkin 
\item ADER(DeC) - spectral difference schemes
\end{enumerate}

Points one and two are -at least from our knowledge- the most used applications of DeC and the modern ADER in the context of time integration scheme whereas ADER(DeC) - spectral difference (SD)  can be seen as a variation of the classical ADER(DeC) approach, successfully applied to solve the magnetic induction equation in \cite{veiga2020arbitrary}. Later in our numerical section, we will compare the performances of ADER-SD and DeC-SD focusing on two simple hyperbolic test cases, e.g. linear advection and Burgers' equation, to verify and support our theoretical findings also for the PDE framework. 

Stability will only be considered in the ODE case focusing on A-stability. As part of future research, a stability analysis (von Neumann and/or entropy stability) of the fully discrete schemes would be desirable but this is beyond the scope of this work.

\subsubsection{DeC as Time Integration Scheme}

The main advantage of DeC as a time integration scheme was pointed out by Abgrall in \cite{abgrall2017dec}. 
He showed that by using the DeC approach, one can avoid the inversion of the mass matrix in continuous 
finite element approximations.  
This is favorable since the mass matrices are usually sparse and hard to invert which is different when 
considering DG schemes where the mass matrix has some block diagonal structure and is easy to invert. 
Abgrall works with residual distribution (RD) schemes. RD puts several high order schemes like continuous and discontinuous Galerkin and Flux Reconstruction (FR) into a common framework \cite{abgrall2018connection,abgrall2020error}.
Further, RD works directly on the degrees of freedom and, through this abstract approach, the use of unstructured grids (e.g. triangles or general polygons) is straightforward. Simultaneously, these advantages do not come without 
some  drawback. The main disadvantage of the RD approach is that applying the method of lines would destroy the high order accuracy of the scheme \cite{abgrall2006residual}. Due to this fact, RD is usually introduced for steady state equations
\begin{equation}\label{eq:steady_state}
\div f(x)=0. 
\end{equation}
To explain shortly the update for a hyperbolic conservation law \eqref{eq:scalarPDE},
we denote by  $\mathcal{R}_p^K$ the space residual for a generic element $K$ (as usual in FE, the domain was split into subdomains) and degree of freedom $p$ which solves the steady state equation \eqref{eq:steady_state} in $p$.
To achieve finally a space-time discretization, the classical DeC from Section \ref{sec:originalDeC} is applied
and we get 
\begin{equation}\label{oneline}
U_p^{m,(k+1)} = U_p^{m,(k)} -
|C_p|^{-1} 
\sum_{K|p \in K}\bigg(\int_K \Phi_p(x) \left(U^{m,(k)}(x) - U^{n,0}(x)\right) \dd x + \Delta t
\sum_{r=0}^M \theta_{r}^m \mathcal{R}_p^K(U^{r,(k)}) \bigg),
\end{equation}
where 
$
|C_p| := \int_K \Phi_p (x)\diff x 
$
has to be strictly positive and $\Phi$ are the spatial basis functions. This term comes from the further first order simplification of the mass lumping operated in the $\L^1$ operator, c.f.  \cite{abgrall2017dec, abgrall2019high,abgrall2018asymptotic} for details. 

As one can recognize the scheme is explicit and as demonstrated in \cite{abgrall2017dec, abgrall2019high}
high order in space and time. An IMEX-DeC-RD variant can be found in 
\cite{abgrall2018asymptotic} and a comparison with IMEX-ADER-RD will be considered in the future. Finally, we want to mention that
DeC has already been used as a time integration scheme in combination with DG in \cite{liu2008strong} where 
the method of lines have been used to split the space-time discretization.

\def\STC{T^n\times V_i}
\def\SC{V_i}
\def\TC{T^n}
\subsubsection{ADER Space--time DG}
Up to now, we have presented the modern and historical ADER approach from our point of view as an ODE solver. 
However, to clarify again that our description of the modern ADER used in \ref{sec:ader} is indeed equivalent with the one used \textit{inter alia} in \cite{dumbser2007FVStiff,zanotti2015space}, we focus again on it. We hope that the understanding for ADER will be simpler with this different perspective.\\
We follow now \cite{zanotti2015space} where ADER is presented as a space--time DG approach for hyperbolic problems.
This is not the unique version of ADER for such problems. In particular, there exist implicit versions, different spatial discretizations, even different ways of integrating the spatial discretization within the time discretization that may change it in some aspects.
Nevertheless, we study the DG in space and time to explain one extension of ADER to hyperbolic PDEs.
It uses time and space test functions, which are the tensor products of basis functions in time and basis functions in space. 
We can write their formulation in our setting.
Hence, we define the new basis functions 
\begin{equation}\label{eq:basisSpaceTime}
\theta_{pq}(x,t):=\Phi_p(x)\phi_q(t),
\end{equation}
 where $\Phi$ are the basis functions in space and $\phi$ are the basis functions in time. With the Einstein summation notation, the reconstruction variable is
\begin{equation}\label{eq:reconstructionTimeSpace}
u(x,t)=u^{pq}\theta_{pq}(x,t)=\sum_{p}\sum_q u^{pq} \Phi_p(x) \phi_q(t).
\end{equation}

Let us consider a hyperbolic problem given as
\begin{equation}
\label{eq:pde}
\partial_t u(x,t) + \nabla \cdot F(u(x,t)) = 0, \qquad  x\in \Omega\subset \R^d,\; t>0
\end{equation}
with appropriate initial and boundary conditions.
 We consider the weak solution of \eqref{eq:pde} in space--time. Let us define a space--time cell, $\STC$, given by the tensor product of a timestep $T^n$ and a volume cell $V_i\subset \R^d$. We multiply \eqref{eq:pde} by test function $\theta_{rs}(x,t)$ and integrate over the defined control volume:
\begin{equation}
\int_{\STC} \theta_{rs}(x,t)\partial_t \theta_{pq}(x,t) u^{pq} \diff x \diff t+ \int_{\STC} \theta_{rs}(x,t) \nabla \cdot F(\theta_{pq}(x,t) u^{pq})  \diff x \diff t=0.
\end{equation}
We also introduce the following definitions
\begin{equation}
u^q(x) = \Phi_p(x) u^{pq}, \qquad u^{p}(t):=\phi_q(t) u^{pq}.
\end{equation}
Now, we apply integration by parts in time and obtain
\begin{equation}
\begin{split}
0=&\int_{\SC} \Phi_r(x) \Phi_p(x) \diff x \left( \phi_s(t^{n+1}) \phi_q(t^{n+1}) u^{pq} - \phi_s(t^{n}) u^{p}(t^n)  -\int_{\TC} \partial_t \phi_s(t) \phi_q(t) \diff t  \; u^{pq} \right)+\\
&\int_{\TC} \phi_s(t)\phi_q(t) \diff t  \int_{\SC} \Phi_r(x)   \nabla \cdot  F(u^q(x)) \diff x .  
\end{split}
\end{equation}
Here, the spatial integral is not treated with integration by parts, in order to obtain a fully local method.

We define the spatial mass matrix and the time mass matrix as
\begin{equation}
\M^x_{rp}:=\int_{\SC} \Phi_r(x) \Phi_p(x) \diff x, \qquad \M^t_{sq}:=\int_{\TC} \phi_s(t)\phi_q(t) \diff t.
\end{equation}
Splitting as done in \eqref{eq:System} into a linear left--hand side and a non--linear right--hand side, we obtain
\begin{equation}
\begin{split}
&\M^x_{rp} \left( \phi_s(t^{n+1}) \phi_q(t^{n+1})   -\int_{\TC} \partial_t \phi_s(t) \phi_q(t) \diff t  \right)u^{pq}= \M^x_{rp} \phi_s(t^{n}) u^{p}(t^n) - \M^t_{sq} \int_{\SC} \Phi_r(x)   \nabla \cdot  F(u^q(x)) \diff x.
\end{split}
\end{equation}

Using the Picard iteration process we obtain, again
\begin{equation}\label{eq:ADER_DG}
\vec{\vec{\M}}_{rspq} u^{pq,(k+1)} = \vec{\vec{r}}(\vec{\vec{\mathbf{u}}}^{(k)})_{rs},
\end{equation}
where 
\begin{align}
&\vec{\vec{\M}}_{rspq}:=\M^x_{rp} \left( \phi_s(t^{n+1}) \phi_q(t^{n+1})   -\int_{\TC} \partial_t \phi_s(t) \phi_q(t) \diff t  \right), \label{eq:ADERDG_mass_matrix}\\
&\vec{\vec{r}}(\vec{\vec{\mathbf{u}}}^{(k)})_{rs}\!\! :=
\M^x_{rp} \phi_s(t^{n}) u^{p}(t^n)\! - \M^t_{sq}\int_{ \SC} \Phi_r(x) \nabla \cdot  F(u^q(x)) \diff x . \label{eq:ADERDG_rhs}
\end{align}

A final step must be done in order to guarantee the communication between neighboring cells, approximating the solutions of generalized Riemann problems as follows
\begin{equation}
\M_{rp} \left( u(t^{n+1})_r-u(t^n)_r \right) + \int_{ \TC} \int_{\partial \SC} \Phi_r(x) \mathcal{G}(u^{(K),-},u^{(K),+}) \cdot \boldsymbol{\mathrm{n}} \, \diff S\, \diff t - \int_{ \TC} \int_{ \SC} \nabla \Phi_r \cdot F(u^{(K)}) \, \diff x\, \diff t =0,
\end{equation}
where $\mathcal{G}$ is a classical numerical two-point flux, $u^+$ is the value of $u$ inside $\SC$ and $u^-$ is the value of $u$ in the neighboring cell, $u^{(K)}(x)$ is the reconstruction of $u$  at $K$th iteration of the Picard--Lindel\"of process and the normal vector $\boldsymbol{\mathrm{n}}$ is pointing outwards. 

\begin{remark}
	The approximation of the generalized Riemann problem used only in the final step allows to obtain a local scheme for the whole iterative process and let the cell communicate at the last stage, allowing an easy parallelization.
	Nevertheless, also using the weak formulation of DG with a numerical flux that allows communication in every stage of the process is possible. This would even save the last update step, though losing the locality of the algorithm. \\
	The advantage of this approach solving the generalized  Riemann problem only in the final step 
	have been analyzed in the works  \cite{balsara2013efficient,dumbser2018efficient} where  comparisons to classical RKDG schemes in terms of performance and runtime can be found.  
	Here, it was recognized that ADER-DG can keep up or be even more effective than traditional RKDG schemes where at every stage, a communication between each element (solving a generalized Riemann problem) have to be done. Therefore, at each stage every spatial degree of freedom has to be touched. 
\end{remark}

\begin{remark}
We presented here in detail the ADER--DG method.  
Considering \eqref{eq:ADERDG_mass_matrix}, one notice that the full space--time matrix, actually depends on several different mass matrices, i.e., on $\M^x_{rp}$ and on $\M^t_{sq}$. 
Changes in the space discretization affect only the space discretization matrix $\M^x_{rp}$, one can, for example, consider different schemes like Flux Reconstruction which are working with the differential formulation of the PDE.
Nevertheless, the time--integration structure of the matrix \eqref{eq:ADERDG_mass_matrix} given by the ADER approach will stay the same. That is why we can consider  ADER to be a time integration method.
\end{remark}

\subsubsection{ADER/DeC--Spectral Difference Schemes}\label{sec:SD}

In this section we briefly describe the spectral difference scheme (as in \cite{Liu2006}). Let us consider again the hyperbolic problem given in \eqref{eq:pde}. We focus on the description of the solution in one element (for a fixed time),

\[ u(x) = \sum_{r=0}^N u(x^r)\Phi_r(x), \]
 which is given by Lagrange interpolation polynomials $\{\Phi_{r}(x)\}_{r=0}^N$, built on a set of points $\mathcal{S}^s=\{x^r\}_{r=0}^N$, called the solution points, with $N$ being the polynomial degree of the interpolation Lagrange polynomials.

The flux is approximated by another set of Lagrange interpolation polynomials $\{\xi_{p}(x)\}_{p=0}^{N+1}$ built on a second set of nodes $\mathcal{S}^f=\{y^p\}_{p=0}^{N+1}$ (flux points). Because the first and last flux points coincide with the boundary of the elements ($y^0$ and $y^{N+1}$), a numerical flux based on a Riemann solver must be used to enforce the continuity of the flux between elements. 

Let $\hat{f}(\cdot)$ denote this single-valued numerical flux, common to the element and its direct neighbor. The  approximation for the flux is given by: 
\begin{equation}
\label{eq:numerical_flux}
f(x) = \hat{f}(u(y^0)) \xi_0(x) + \sum_{p=1}^{N} f(u(y^p)) \xi_p(x) +  \hat{f}(u(y^{N+1})) \xi_{N+1}(x).
\end{equation}

The final update of the solution is obtained using the exact derivative of the flux evaluated at the solution points, so that the semi-discrete scheme reads:
\begin{align*} 
\frac{{\rm d}}{{\rm d}t} u(x^r) = - \hat{f}(u(y^0))  \xi^{\prime}_0(x^r) -\sum_{p=1}^{N} f(u(y^p)) \xi^{\prime}_p(x^r) - \hat{f}(u(y^{N+1})) \xi^{\prime}_{N+1}(x^r),
\end{align*}
where the primes stand for the derivative of the Lagrange polynomials.

To evolve in time, one can apply the ADER or DeC scheme. We first describe the ADER scheme, based on a Galerkin projection in time.  We multiply the previous conservation law by an arbitrary test function $\psi(t)$, integrating in time over the interval $[t^n,t^{n+1}]$. We update the solution $u$ in the solution points $u(x^i,t)$, and we define $\bu=(u^0,\dots,u^N)$ the vector of all these values, where $u^r=u(x^r)$. The update reads
\[\int^{t^{n+1}}_{t^{n}}  \psi(t)\partial_t \bu {\rm d}t +\int^{t^{n+1}}_{t^{n}}   \psi(t)\partial_x f(\bu)  {\rm d}t = 0,\]
where $\partial_x f(\bu)$ is given by \eqref{eq:numerical_flux} evaluated in the solution points.
Integrating by parts yields:
\begin{equation}
\label{eq:ADER_ibp}
\psi(t^{n+1}) \bu(t^{n+1}) - \psi(t^{n}) \bu(t^{n})  
-\int^{t^{n+1}}_{t^{n}}  \partial_t \psi(t) \bu(t) {\rm d}t
+\int^{t^{n+1}}_{t^{n}}  \psi(t) \partial_x f(\bu(t)) {\rm d}t = 0.
\end{equation}

We now represent our solution using Lagrange polynomials {\it in time }$\phi_i(t)$ defined on $M+1$ Legendre quadrature points 
$\lbrace t_s \rbrace_{s=0}^M \in [t^n,t^{n+1}]$, which together with the quadrature weights $\lbrace w_s \rbrace_{s=0}^M$
can be used to perform integrals at the correct order in time. We are aiming at a solution with the same order of accuracy in time
and in space, so $M$ is taken in order to match the spatial accuracy.
We can write:
\[ \bu(t) = \sum_{s=0}^M \bu^{s} \phi_s(t),\]
and replace the integrals in \eqref{eq:ADER_ibp} by the respective quadratures. We now replace the arbitrary test function $\psi(t)$
by the set of Lagrange polynomials $\{\phi_q(t)\}_{q=0}^M$ and obtain:
\begin{equation}\label{eq:System2}
\phi_q(\tp)\left(\sum_{s=0}^{M} \bu^{s} \phi_s(\tp)\right) - \phi_q(\tn)\bu(\tn)
- \Delta t \sum_{s=0}^{M} w_s \phi^\prime_q(t_s) \bu^{s} 
+ \Delta t \sum_{s=0}^{M} w_s \phi_q(t_s) \partial_x f(\bu^{s}) 
=0 .
\end{equation}

Then, we can write the mass matrix $\M \in \R^{(M+1)\times(M+1)}$ 
and a right-hand side vector $r$ as:
\begin{equation}\label{eq:MassmatrixAder2}
\M_{qs} = \phi_q(\tp)\phi_s(\tp)- \Delta t w_s \phi^\prime_q(t_s)~~~{\rm and}~~~r_q = \phi_q(\tn)\bu(\tn) - \Delta t \sum_{s=0}^{M} w_s \phi_q(t_s) \partial_x f(\bu^s).
\end{equation}
The previous implicit nonlinear equation with unknown $\lbrace \bu^s \rbrace_{s=0}^M$, if we define $\bbu:=(\bu^0, \dots, \bu^M)$, is now written as:
\begin{equation}\label{fix:point2}
 \M_{qs} \bu^{s} = r_q(\bbu),
\end{equation}
which can be solved with a fixed-point iteration method. 

The final predicted states $\lbrace \bu^s \rbrace_{s=0}^M$ evaluated at our quadrature points are used to update the final solution as:
\begin{equation}
\bu(\tp) = \bu(\tn) - \Delta t \sum_{s=0}^M w_s \partial_x f(\bu^s).
\end{equation}
Note that in this version of the ADER scheme, we need to estimate the derivative of the flux for each time slice according to the SD method, including the Riemann solvers at element boundaries. This differs from the ADER flavor presented above, which remains local (without boundary evaluation) until the final update. 

To perform the time evolution using the DeC scheme, we write the update iterations as:

\begin{equation}
\bu^{s,{(k+1)}} = \bu^{s,(k)} - \Delta t \sum_{q=0}^M \theta_{q}^s  \partial_x f( \bu^{q,(k)} ), \quad k = 0, ..., K-1.
\end{equation}

And the final solution at $t^{n+1}$ is given as:

\[ \bu(\tp) = \bu^{M,(K)} .\]

\section{Numerics}\label{sec:Num}
\label{sec:results}

In this section, we verify our analysis by numerical simulations and also compute the stability region for ADER and DeC 
in this context. Here, we focus on A-stability for ODEs and  this  is up to our knowledge the first time that these stability conditions are investigated on ADER schemes.  Next, we compare the performance of DeC and ADER in the ODE case for linear 
and nonlinear scalar equations as well as ODE systems. 
First, we focus on the explicit ADER and DeC methods and then we consider the implicit versions which are described in Section \ref{sub_stiff}.  
To finish this section, we apply both approaches (explicit DeC and ADER) to simple PDE cases for the sake of completeness, where the space discretization 
is done via a spectral difference method. 

\subsection{Stability Conditions}
We study and compare the A-stability property of ADER and DeC.
We have to mention that the stability for different kind of DeC methods is already studied in the first work on DeC by Dutt et al. 
\cite{dutt2000dec} but, here, we investigate for the first time the simplified version introduced by Abgrall \cite{abgrall2017dec}. 
Even if the differences are small, we present the results for the sake of completeness. \\
Consider  the test problem
\begin{equation}
	\begin{split}
		\label{eq:scalar-linear}
		y'(t) &= \lambda y(t)\\
		y(0) &= 1
	\end{split}
\end{equation}
where $\lambda$ is a complex number. Let us define with $\phi$ the stability function for any method, see \cite{butcher08numODE}.
In figure \ref{fig:stab}, the left picture shows the stability regions $\mathcal{S}:=\lbrace z \in \mathbb{C} : \, \phi(z) < 1 \rbrace$ for ADER and DeC methods using Gauss--Lobatto 
nodes and different orders. The stability regions mapped each other and no differences can be seen.
In the right picture of figure \ref{fig:stab}, we show the stability regions for both ADER and DeC methods, using different collocation points for the subtimesteps, namely, equispaced, 
Gauss--Legendre and Gauss--Lobatto nodes. We verify that the choice of the collocation points for the substeps does not interfere with the stability region. 
Furthermore, both methods seem identical for different orders (2 to 5). Further studies revealed that the different subtimestep locations (Gauss-Legendre, 
Gauss-Lobatto and equispaced points) yielded no difference in the stability region for explicit DeC and ADER.

\begin{figure}[h]
    \centering
    \includegraphics[width=0.45\textwidth]{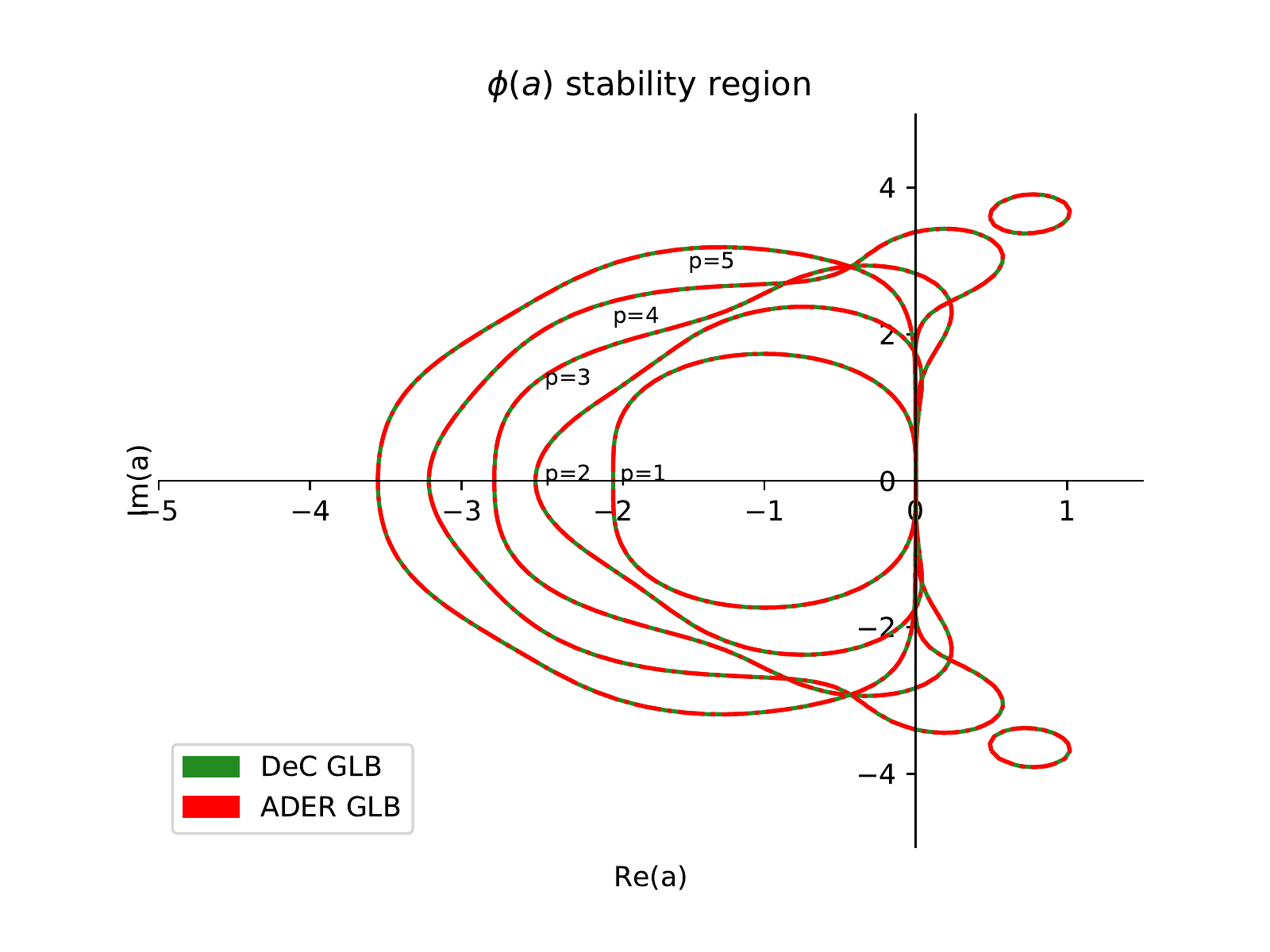}
    \includegraphics[width=0.45\textwidth]{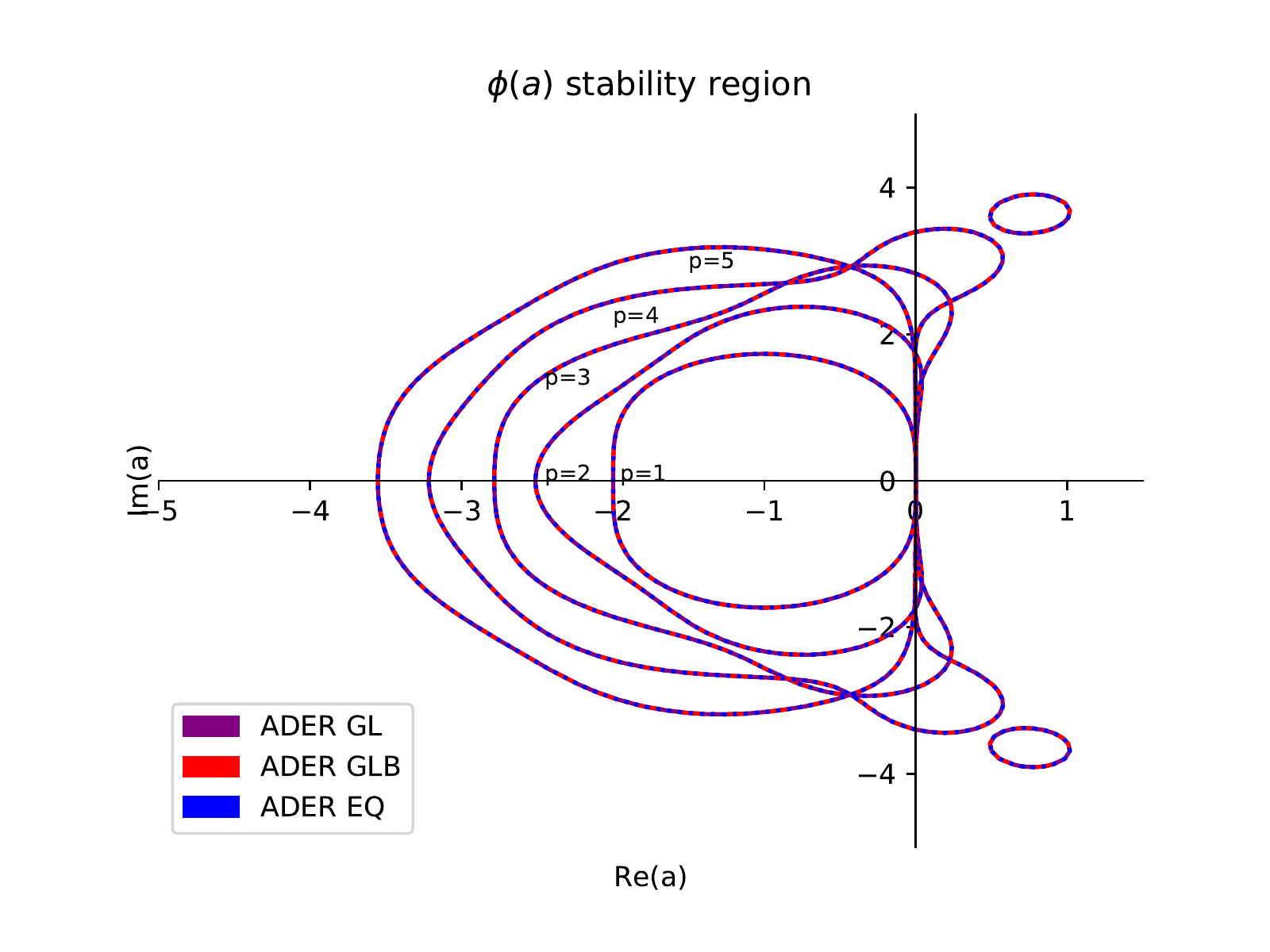}
    \caption{Stability region}
    \label{fig:stab}
\end{figure}

Furthermore, one can notice in figure \ref{fig:stab} that, as the order of accuracy increases, the stability region grows, as it is common for many other ODE solvers.

The pictures are obtained with numerical simulations of \eqref{eq:scalar-linear} for one time step with complex values of $\lambda$ on an equispaced grid and $\Delta t=1$. 
A purely analytical study of the A-stability for these methods is not feasible since it leads to solving polynomial inequalities of degree $p(p+1)$.

As mentioned above, here we analyzed the stability properties of ADER and DeC as some classical time integration scheme and focused on A-stability. 

Since both methods are usually applied for hyperbolic problems, one could be interest in finding CFL bounds from this type of analysis. Nevertheless, this is not directly possible, since they will strongly depend on the spatial discretization. An extension of the stability analysis to von Neumann stability or energy (entropy) stability, as the ones in \cite{abgrall2019analysis, glaubitz2018application}, would be desirable and it is planned for future research. 
For instance, a theoretical von Neumann stability analysis for DeC with RD spatial discretization using different polynomials (Lagrangian and Bernstein) with different stabilization techniques can be found in the PhD thesis of one of the authors \cite[Section 4.2]{torlo2020hyperbolic}, whereas linear stability and nonlinear
stability for different ADER--DG variations have been investigated numerically in \cite{dematte2019ader}.

\subsection{Convergence Error}
In this section we report on the convergence rates of DeC and ADER, for different nodal placement of the subtime steps and for different problems.

We compute the absolute discrete $L^2$ error taken over all the constituents (e.g. if considering systems of ODEs) and all the timesteps $\{t^n\}_{n=0}^N$.

\[ E =  \left( \frac{1}{N} \sum_{n=1}^N \frac{1}{I} \sum_{i=1}^I \left( y_i(t^n)-y_i^n \right)^2\right)^{\frac{1}{2}}. \]

Whenever there is no analytical solution readily available, we use  ODE integrators written in 
Julia \cite{rackauckas2017differentialequations} to compute a high accuracy numerical solution. 
These integrators use a couple of routines to select a proper numerical method for the specific type
of problem. The error tolerance is set to $10^{-15}$. 

\subsubsection{Scalar Cases}
We start by considering the same simple linear, scalar test case as in \eqref{eq:scalar-linear}, with the initial condition $y(0) = 1$, $\lambda=10$ for $t\in[0,0.1]$.
figure \ref{fig:scalar-linear-conv} shows the error convergence for the DeC and ADER methods, using Gauss-Lobatto nodal
placement and equidistant and Gauss-Legendre nodal placements, respectively. We note that at 
lower orders and when the error is far enough from machine precision, all methods seem to behave the same. 
However, at the highest reported orders, we note that ADER EQ (denoting the equispaced nodes)
does not converge with the right order. We suspected that the cause of this is the higher condition number of the mass matrix produced with
equispaced nodes. It is well known that Gauss--Legendre and Gauss--Lobatto nodes have better stability properties. DeC is not 
affected by this particular problem because it does not require a mass matrix. 
Another problem that may arise are the negative weights related to the equispaced nodes. They are present from ninth order on and also the classical Runge phenomena can appear. This is also true, of course, for ADER EQ.
Furthermore, ADER GL (denoting Gauss--Legendre nodes) 
also shows a strange behavior at high orders and high resolutions. We believe this is because of the required
extrapolatory step to compute the solution at $t^{n+1}$, only necessary when using nodes that do not include the boundaries of the interval.
The difference between ADER GLB and DeC GLB seem to be negligible.

Next, we consider the  nonlinear scalar problem

\begin{equation}
\label{eq:scalar-nonlinear}
\begin{split}
y'(t) &= -k |y(t)| y(t) \\
y(0) &= 1,
\end{split}
\end{equation}
with $k=10$ and $t\in[0,0.1]$.

Figure \ref{fig:scalar-nonlinear-conv} shows the error convergence for the DeC and ADER methods, using Gauss--Lobatto nodal placement, equidistant and Gauss--Legendre nodal placements, respectively.
The behavior of the different schemes is similar to the one reported in the scalar linear case.

\begin{figure}
\begin{minipage}[c]{0.45\linewidth}
\includegraphics[width=\linewidth]{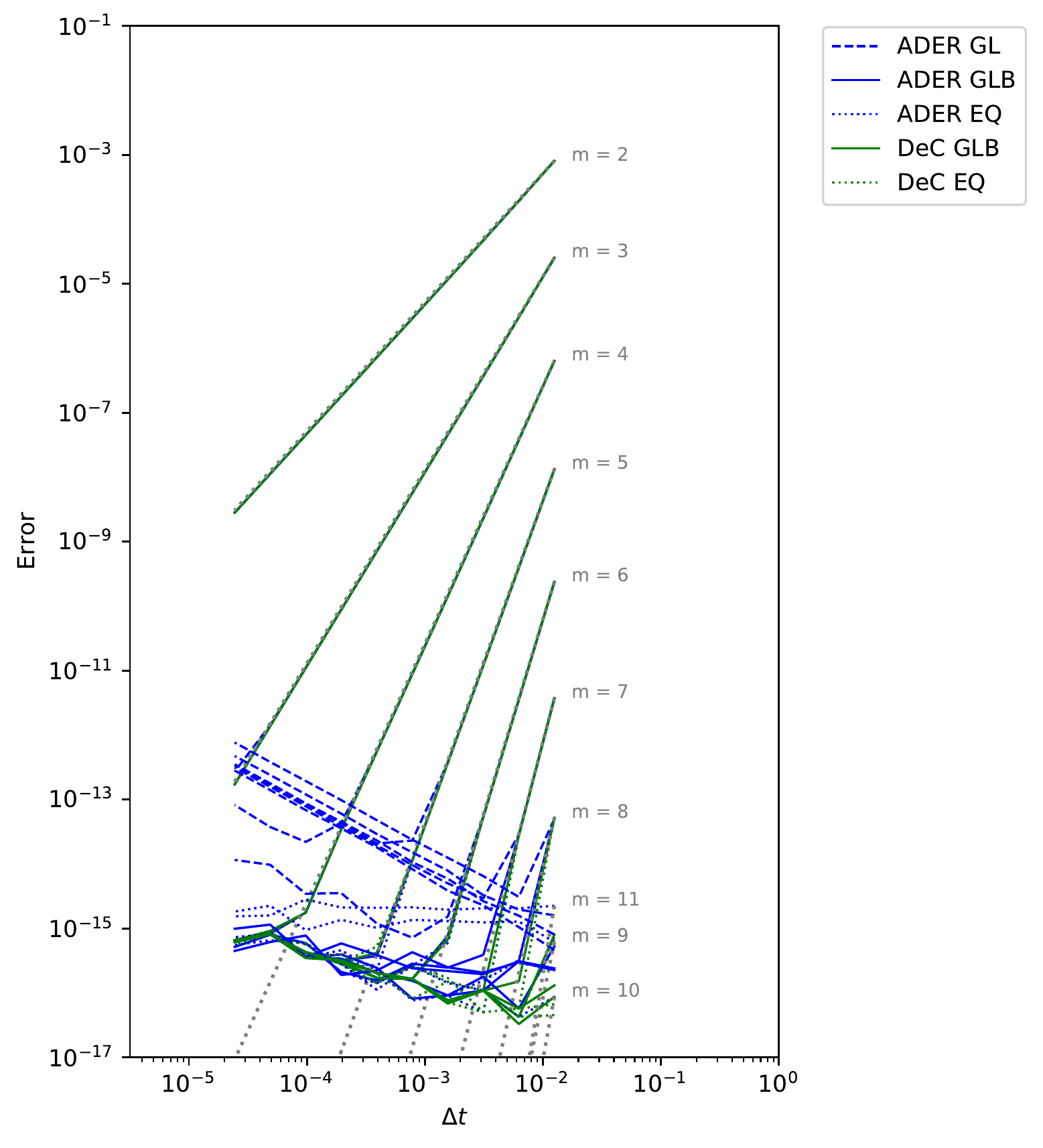}
\caption{Convergence curves for ADER and DeC, varying the approximation order and collocation of nodes for the subtimesteps for a scalar linear ODE \eqref{eq:scalar-linear}.}
\label{fig:scalar-linear-conv}
\end{minipage}
\hfill
\begin{minipage}[c]{0.45\linewidth}
\includegraphics[width=\linewidth]{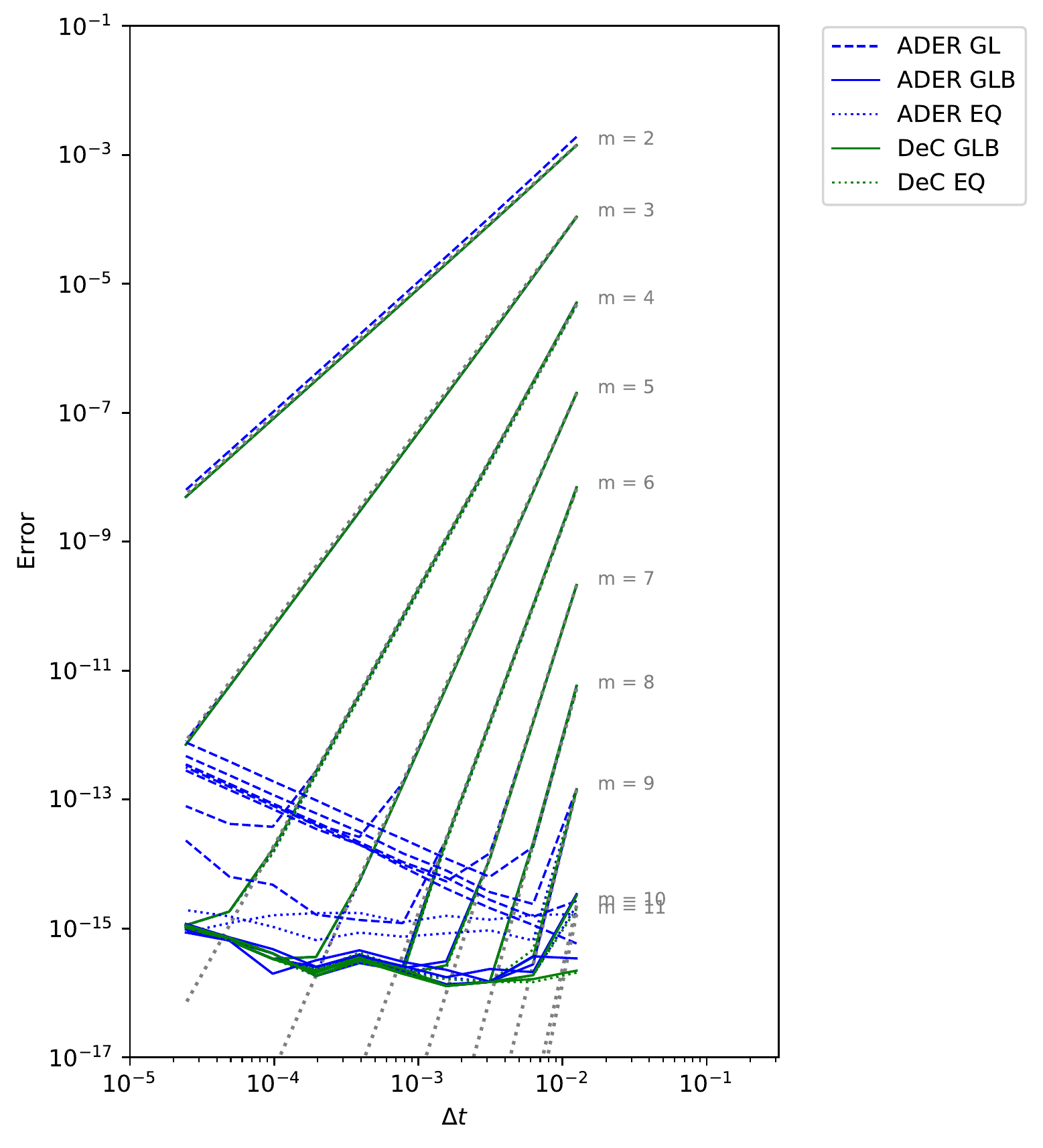}
\caption{Convergence curves for ADER and DeC, varying the approximation order and collocation of nodes for the subtimesteps for a scalar nonlinear ODE \eqref{eq:scalar-nonlinear}.}
\label{fig:scalar-nonlinear-conv}
\end{minipage}%
\end{figure}

\subsubsection{Systems}
We consider a simple biological model that models the transfer of biomass, given by a set of linear ODEs
\begin{equation}
\label{eq:system-linear}
\begin{split}
y_1'(t) &= -y_1(t) + 3y_2(t)\\
y_2'(t) &= -3y_2(t) + 5y_3(t)\\
y_3'(t) &= -5y_3(t),
\end{split}
\end{equation}

with the initial conditions
\[ (y_1(0), y_2(0), y_3(0)) = (0, 0, 10),\]
and we let $t\in[0,1]$.

The analytical solution is given by
\begin{align*}
y_1(t) &= \frac{15}{8} y_3(0) \left( e^{-5t} - 2e^{-3t} + e^{-t} \right) \\
y_2(t) &= \frac{5}{2} y_3(0) \left(-e^{-5t} + e^{-3t} \right) \\
y_3(t) &=  y_3(0) e^{-5t}.
\end{align*}

In figure \ref{fig:system-linear-conv}, we show the error convergence for the DeC and ADER methods using different nodal placements. Again, everything is as expected and we obtain the desired convergence rates. Only minor differences can be observed between the different methods.

\subsubsection*{Lotka--Volterra Equations}
Next, we consider the Lotka--Volterra equations \cite{nicolas2011} that describe the dynamics of a two--species system in which one is a predator and the other its prey. The following nonlinear equations describe the dynamics of the prey ($y_1$) and the predator ($y_2$):
\begin{equation}
\label{eq:system-nonlinear}
\begin{split}
y_1'(t) &= \alpha y_1(t) - \beta y_1(t) y_2(t) \\
y_2'(t) &= -\gamma y_2(t) + \delta y_1(t) y_2(t)
\end{split}
\end{equation}
where $\alpha$ is the growth rate of the prey, $\beta$ the predation rate, $\delta$ the predator food conversion efficiency and $\gamma$ the predator mortality.

We use the following initial conditions and parameters
\begin{equation}
\label{eq:system-nonlinear-ic}
\begin{split}
\left( y_1 (0), y_2(0) \right) &=  (1,2) \\
\left( \alpha, \beta, \delta, \gamma \right) &= (1,0.2,0.5,0.2),
\end{split}
\end{equation}
and we let $t\in[0,5]$.

Again, figure \ref{fig:lotka-conv} shows the error convergence for the DeC and ADER methods, using different nodal placements. In figures \ref{fig:lodka-sol-ader} and \ref{fig:lodka-sol-dec}, we show the solution to the initial conditions \eqref{eq:system-nonlinear-ic} and for $t \in [0,100]$, comparing the performance of ADER and DeC (with Gauss--Lobatto nodes) at different orders and resolutions. As expected the the two schemes do not exhibit large differences between the schemes, and one can notice that higher orders  produce more accurate solutions.

\begin{figure}
\begin{minipage}[c]{0.45\linewidth}
\includegraphics[width=\linewidth]{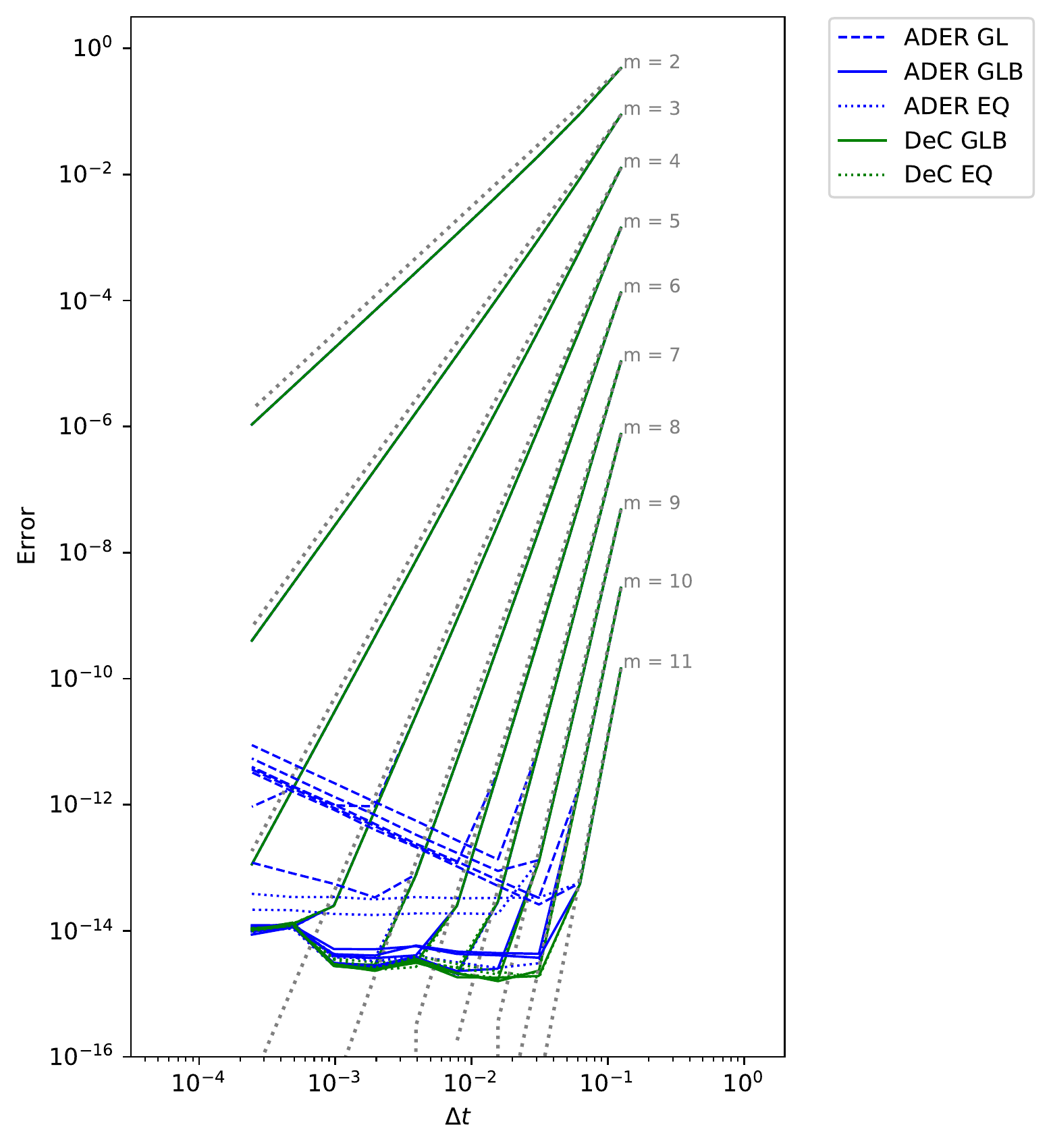}
\caption{Convergence curves for ADER and DeC, varying the approximation order and collocation of nodes for the subtimesteps for the system of linear ODEs \eqref{eq:system-linear}.}
\label{fig:system-linear-conv}
\end{minipage}
\hfill
\begin{minipage}[c]{0.45\linewidth}
\includegraphics[width=\linewidth]{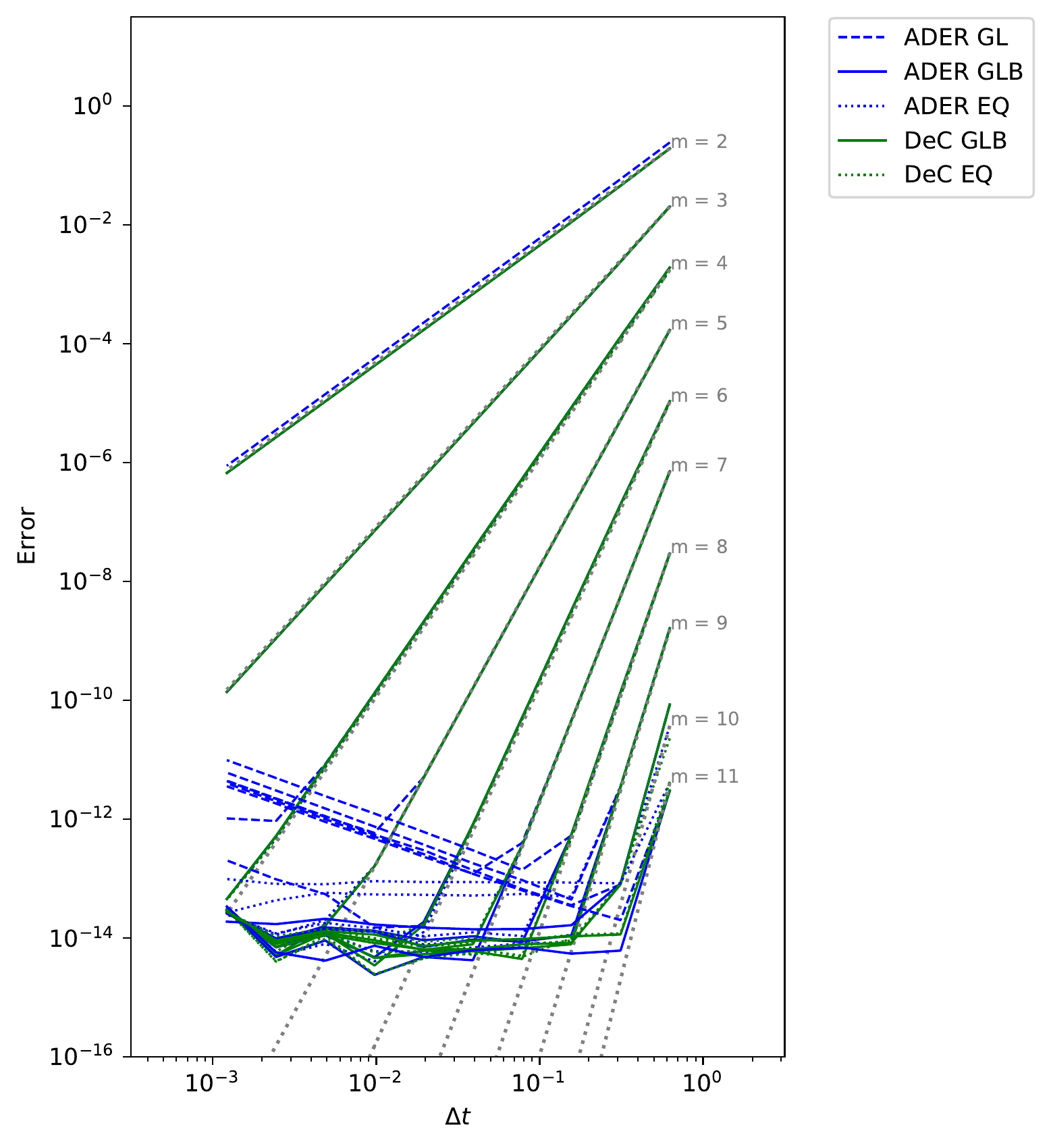}
\caption{Convergence curves for ADER and DeC, varying the approximation order and collocation of nodes for the subtimesteps for the system of nonlinear ODEs \eqref{eq:system-nonlinear}.}
\label{fig:lotka-conv}
\end{minipage}%
\end{figure}

\begin{figure}
	\begin{center}
\includegraphics[width=0.65\linewidth]{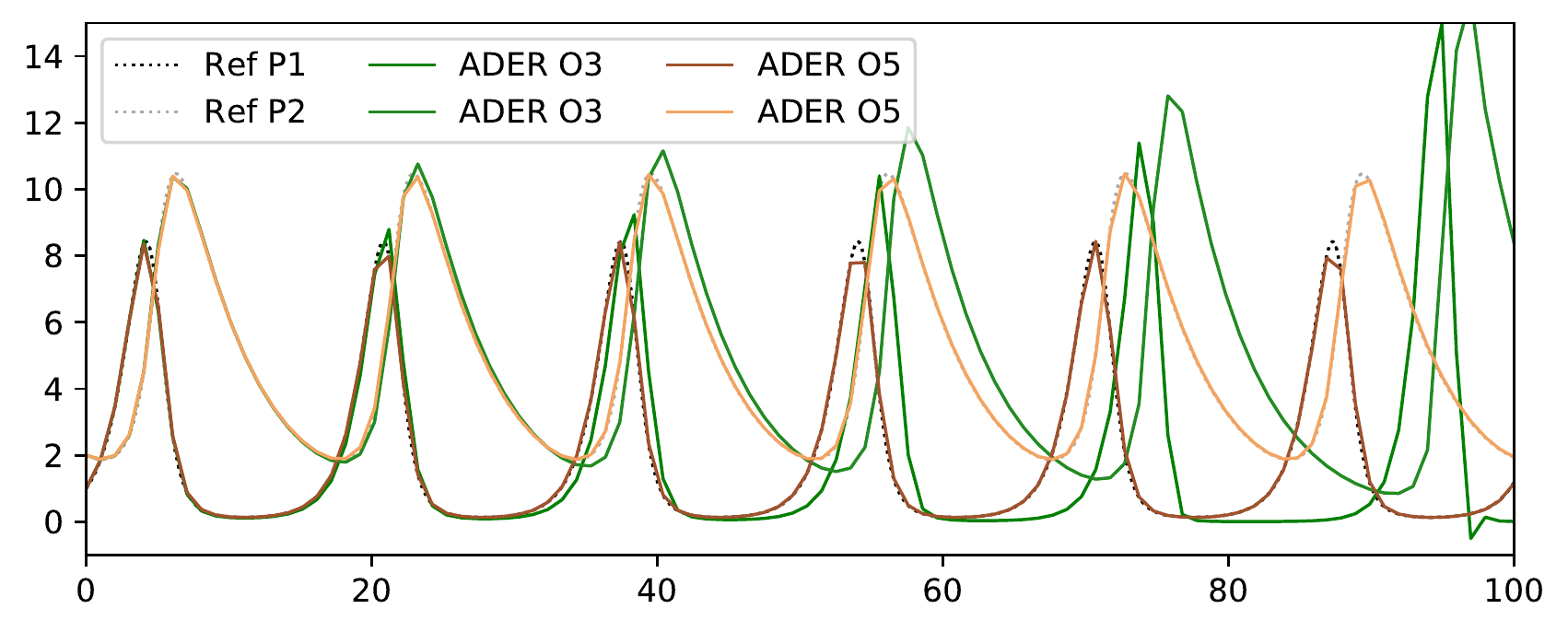}\\
\includegraphics[width=0.65\linewidth]{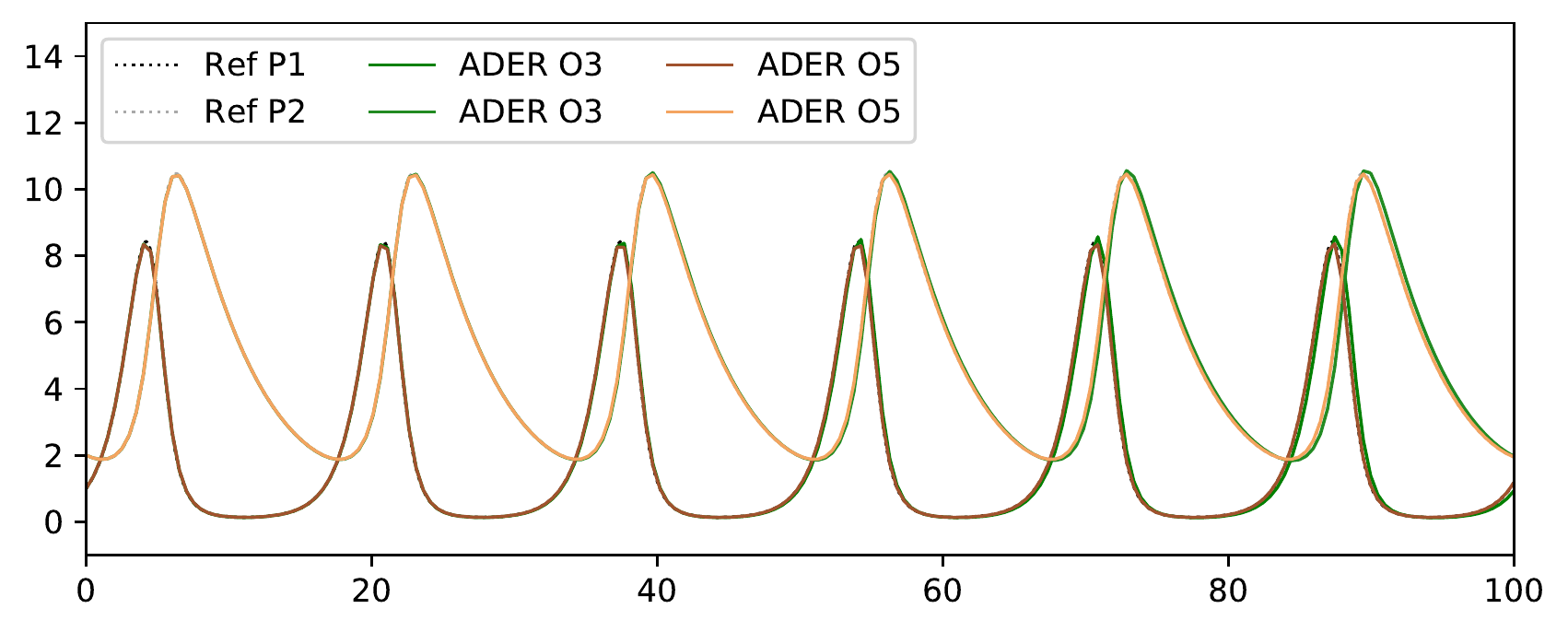}
\caption{Numerical solution of the Lotka-Volterra system \eqref{eq:system-nonlinear}, using initial conditions \eqref{eq:system-nonlinear-ic} using ADER with Gauss--Lobatto nodes at different orders. The top figure uses a timestep $\Delta t = 1$, the bottom figure $\Delta t = 0.5$. \label{fig:lodka-sol-ader}}
	\end{center}
\end{figure}
\begin{figure}
\begin{center}
\includegraphics[width=0.65\linewidth]{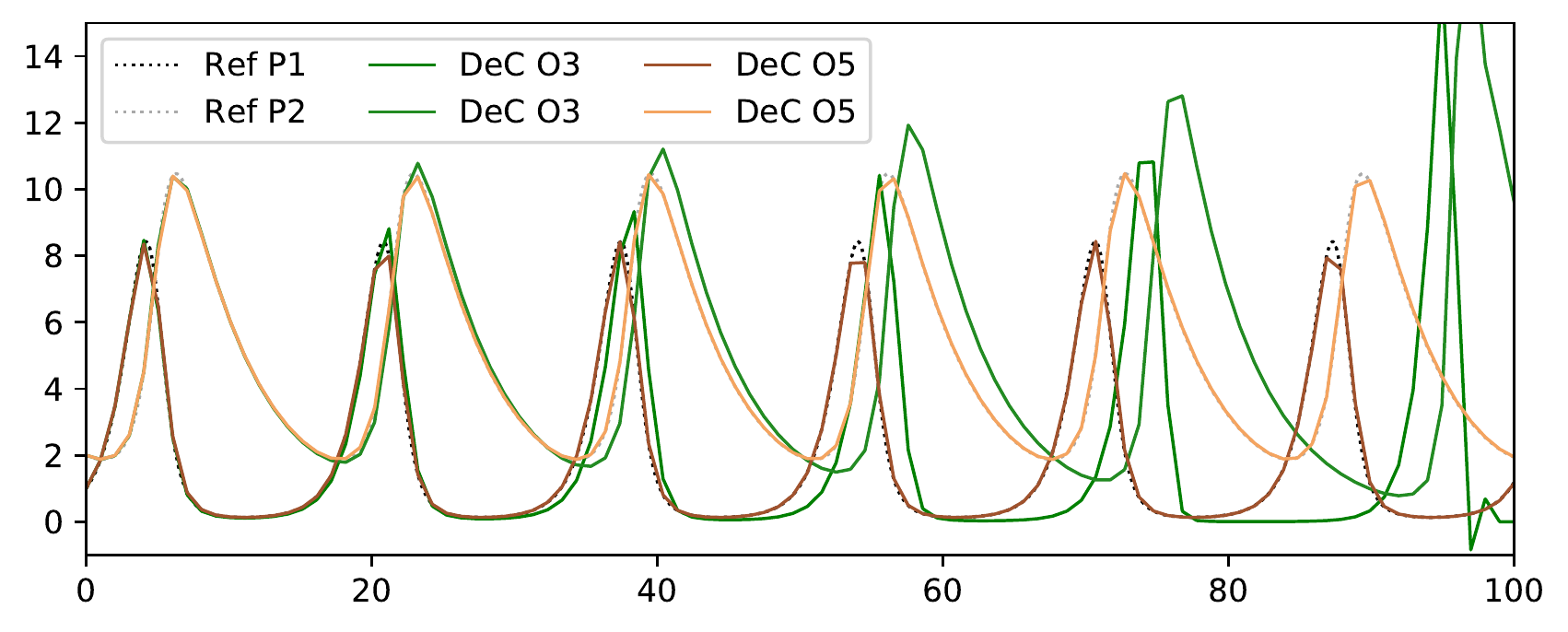}\\
\includegraphics[width=0.65\linewidth]{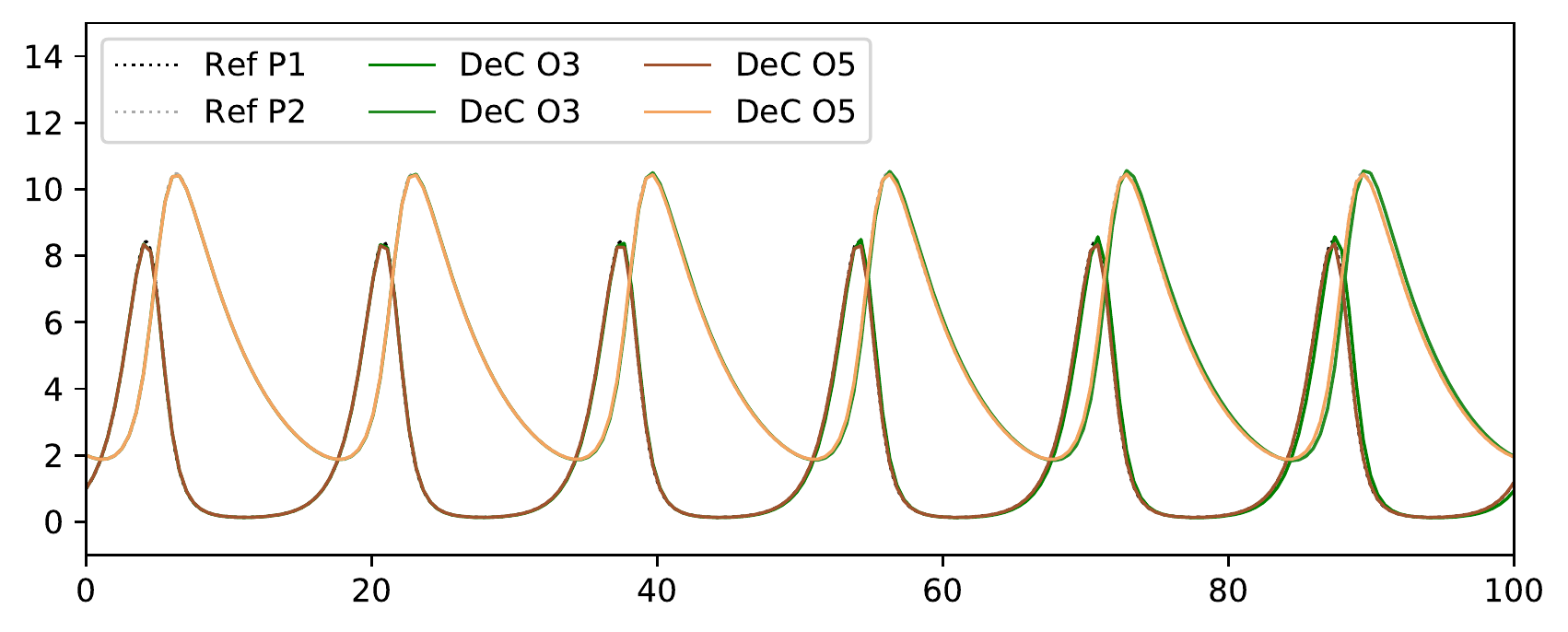}
\caption{Numerical solution of the Lotka-Volterra system \eqref{eq:system-nonlinear}, using initial conditions \eqref{eq:system-nonlinear-ic} using DEC with Gauss-Lobatto nodes at different orders. The top figure uses a timestep $\Delta t = 1$, the bottom figure $\Delta t = 0.5$. \label{fig:lodka-sol-dec}}
\end{center}
\end{figure}

\subsection{Stiff Problems}\label{sec:StiffSimulations}
In this section, we test the implicit methods for few stiff problems.
To start, we consider again the linear system \eqref{eq:system-linear} with a final time of $T=10$ and larger time steps.
For this problem, in the implicit methods, we considered the whole right-hand-side as the stiff part, and, hence, $S$ is defined as
\begin{equation}
	S(y) = \left(-y_1+3y_2, -3y_2+5y_3, -5y_3\right)^T.
\end{equation}
We notice in figure \ref{fig:stiff_linear_system} that the implicit methods resemble the profile of the exact solution even with few timesteps, while the explicit ADER (as well as the explicit DeC not reported in picture) fails to capture the right behavior.

\begin{figure}[h]
\begin{minipage}{0.48\textwidth}
	\includegraphics[width=\linewidth]{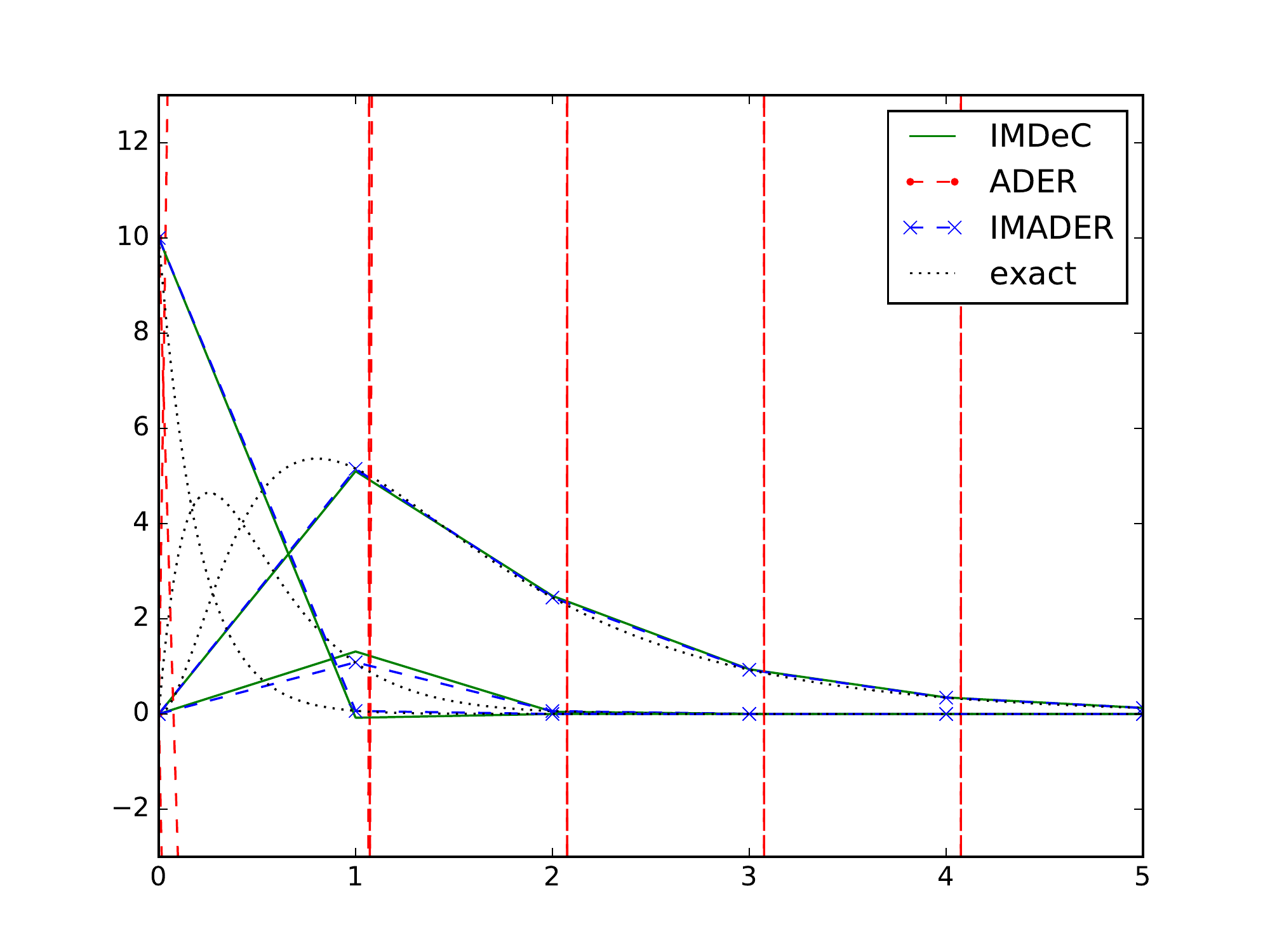}
	\caption{Simulation of \eqref{eq:system-linear} with 4th order implicit DeC, explicit and implicit ADER on Gauss Lobatto points with $\Delta t=1$.}
	\label{fig:stiff_linear_system}
\end{minipage}\hfill
\begin{minipage}{0.48\textwidth}
	\includegraphics[width=\linewidth]{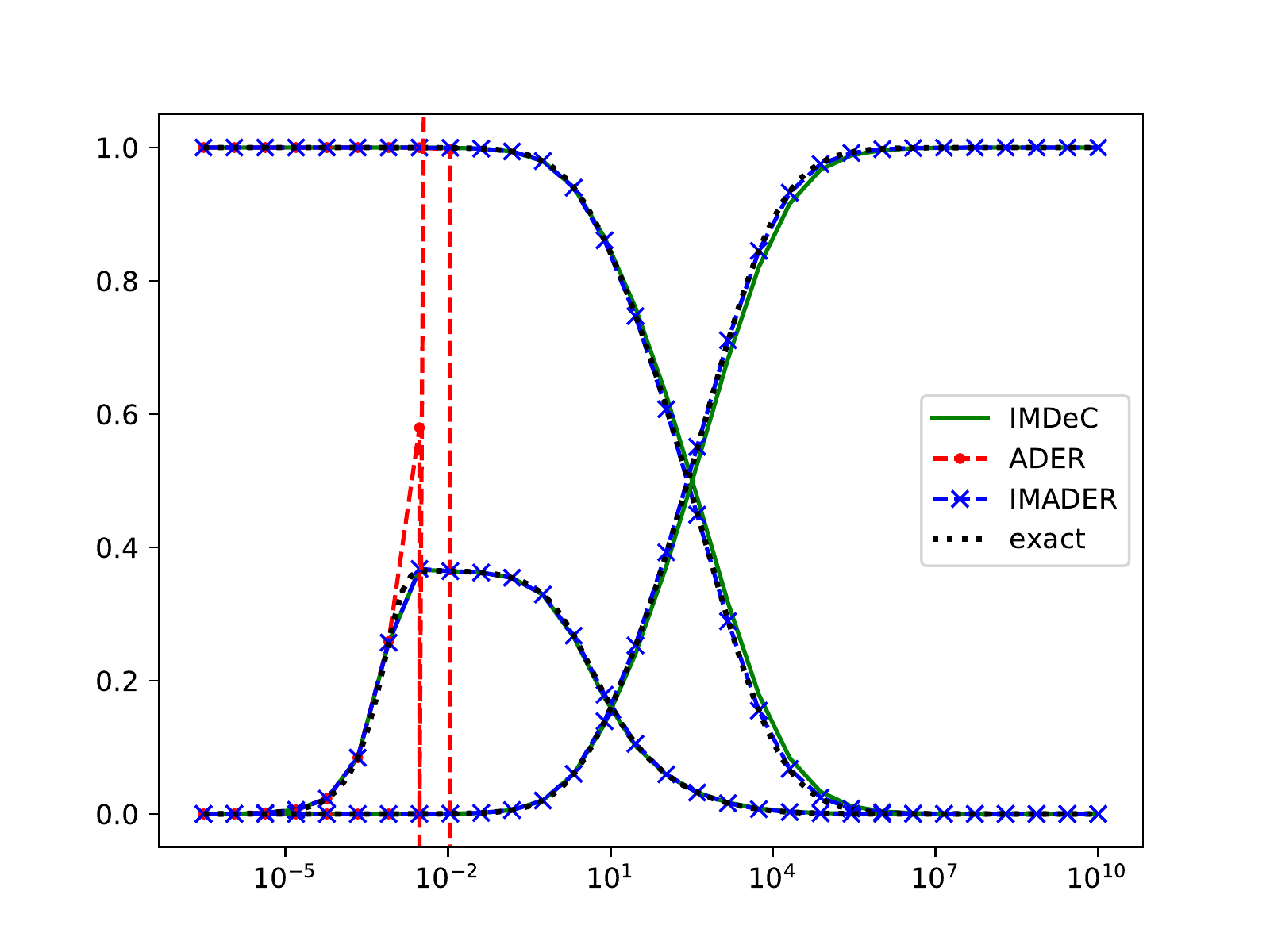}
\caption{Simulation for Robertson problem \eqref{eq:Robertson} with third order implicit DeC, explicit and implicit ADER with 30 timesteps. \label{fig:Robertson}}
\end{minipage}
\end{figure}

The second stiff test we perform is the Robertson problem, which is a highly stiff benchmark problem in the field.
It describes a chemical reaction of 3 constituents and it evolves with different time scales: very rapidly at the beginning and very slowly at the end. 
It is defined by
\begin{equation}\label{eq:Robertson}
	\begin{pmatrix}
	y_1(t)\\y_2(t)\\y_3(t)
	\end{pmatrix}'=\begin{pmatrix}
	10^4y_2(t)y_3(t)-0.04y_1(t)\\  0.04y_1(t) -10^4 y_2(t)y_3(t) -3\cdot 10^7 y_2(t)^2 \\ 3\cdot 10^7 y_2(t)^2
	\end{pmatrix}, \, t\in [0,10^{11}],\qquad y(0)=\begin{pmatrix}
	1\\0\\0
	\end{pmatrix}.
\end{equation}
In this system we consider again in the stiff part the whole right-hand-side, namely $S(y)=F(y)$.
In order to catch the behavior of this simulation the timesteps are chosen progressively increasing ($\Delta t^n=2\Delta t^{n-1}$) as prescribed in \cite{offner2019arbitrary}.
In figure \ref{fig:Robertson} we present the results of the 3 components for the different methods. The second constituent $y_2$ is multiplied by $10^4$ to make its evolution visible on the plot. We observe that the implicit methods are the only ones able to converge to the exact solution and, despite very similar results, the implicit ADER method is more precise than the implicit DeC, as remarked in section \ref{sub_stiff}. 
The explicit methods fail already at the beginning of the simulations because of the wide oscillations.

\subsubsection{PDE Case}
For completeness, we verify our theoretical results focusing on two simple hyperbolic problems.
First, we  consider the linear advection equation, a one dimensional scalar, linear partial differential equation, 
given by the following initial value problem

\begin{equation}\label{eq:advection}
\begin{split}
&\partial_t u + \partial_x u = 0, \quad x\in\mathbb{R}, \quad t>0 \\
&u(x,0) = u_0(x), \quad x \in \mathbb{R}.
\end{split}
\end{equation}

We set for the initial condition
\[u_0(x) = \sin(2\pi x) \quad x \in [0,1].\]

We discretize the equation with the method of lines. The space operator is discretized with the SD method in Section \ref{sec:SD}, whereas the time integration is performed with explicit ADER and DeC (using the different collocation points for the subtimesteps). 

The time step is given as in \cite{Vanharen2017} for the SD method, i.e.,
\[ \Delta t = \frac{C}{M+1} \frac{\Delta x}{|v_{max}|},\]
where $C$ is the Courant factor, set to $0.5$ and $M$ the maximal polynomial degree for the spatial approximation.

In figures \ref{fig:advection-conv-ader} and \ref{fig:advection-conv-dec}, we observe the appropriate convergence rates at $T=1$.
\begin{figure}
\begin{minipage}[c]{0.45\linewidth}
\includegraphics[width=\linewidth]{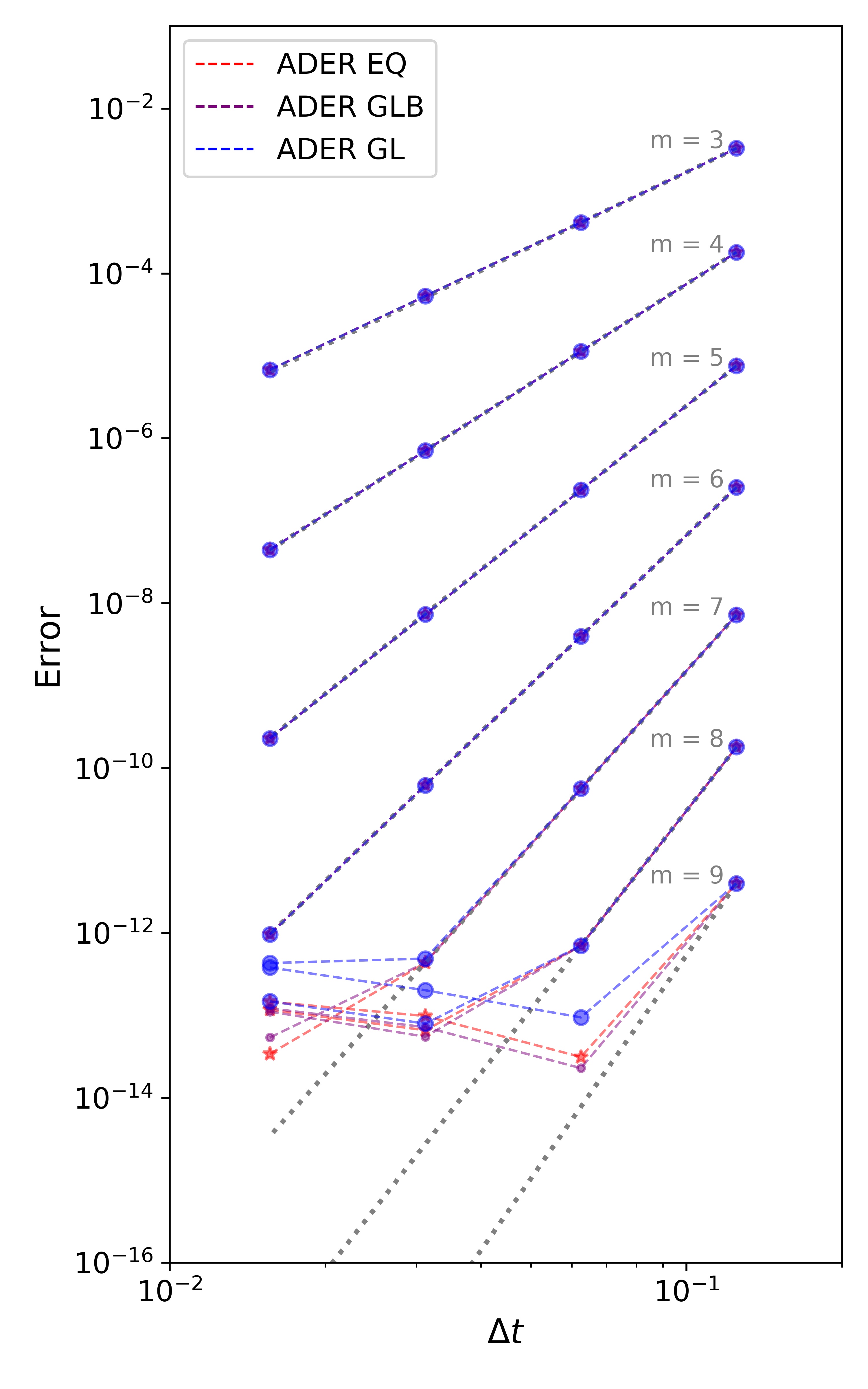}
\caption{Convergence curves for ADER, varying the approximation order and collocation of 
nodes for the subtimesteps for the linear advection equation  \eqref{eq:advection}.}
\label{fig:advection-conv-ader}
\end{minipage}
\hfill
\begin{minipage}[c]{0.45\linewidth}
\includegraphics[width=\linewidth]{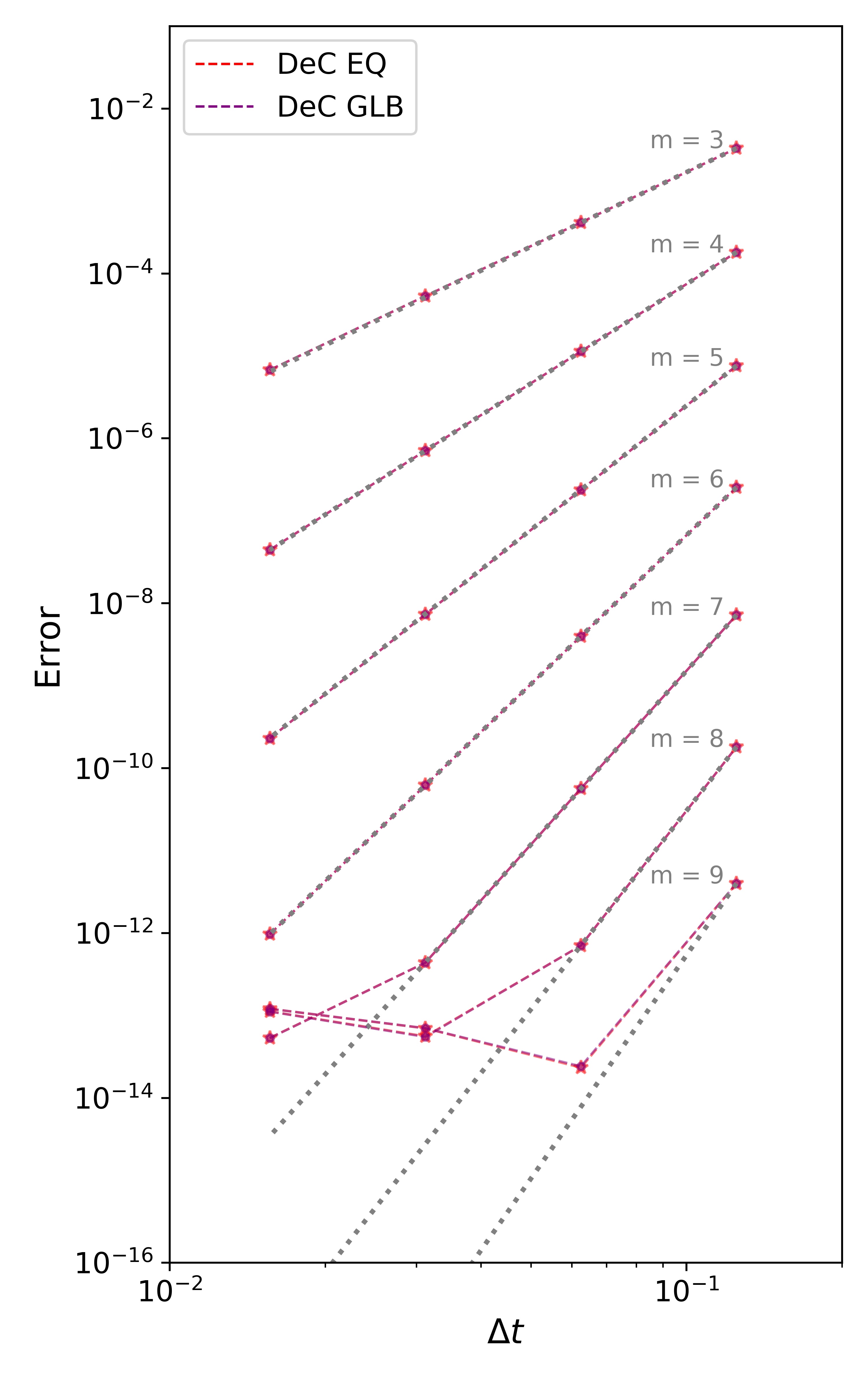}
\caption{
Convergence curves for DeC, varying the approximation order and collocation of 
nodes for the subtimesteps for the linear advection equation  \eqref{eq:advection}.}
\label{fig:advection-conv-dec}
\end{minipage}%
\end{figure}

Finally, we consider the Burgers' equation, a one dimensional scalar, 
nonlinear partial differential equation, given by the initial value problem

\begin{equation}\label{eq:Burgers}
\begin{split}
&\partial_t u + \partial_x \left( \frac{u^2}{2} \right) = 0, \quad x\in\mathbb{R}, \quad t>0 \\
&u(x,0) = u_0(x), \quad x \in \mathbb{R}.
\end{split}
\end{equation}

We consider the initial conditions

\[u_0(x) = \tanh(10x-5) \quad x \in [0,1],\]
with $t \in [0, 0.05]$.

Again, the space operator is discretized with an SD method,
whereas the time integration is performed with ADER and DeC (using the different collocation points for the subtimesteps).
From figures \ref{fig:burgers-conv-ader} and  \ref{fig:burgers-conv-dec}, 
we observe the appropriate convergence rates, with no noticeable differences between the different collocation points. The reference solution is a high resolution numerical solution, interpolated at the necessary spatial points to compute the error. It is to note that the solution in the ADER case was more stable when the update was performed in the following way:
\[  u(t^{n+1}) = u(t^{n}) - \Delta t \partial_x \int_{T^n}  f(\vec{\phi(t)}^T \bbc^{(k+1)}), \]
rather than
\[ u(t^{n+1}) = \vec{\phi(1)}^T \bbc^{(k+1)}. \]

This made no significant difference for the linear advection case.
Extensions to more complex problems including stiff source terms and a stability analysis will be considered in future research.
\begin{figure}
\begin{minipage}[c]{0.45\linewidth}
\includegraphics[width=\linewidth]{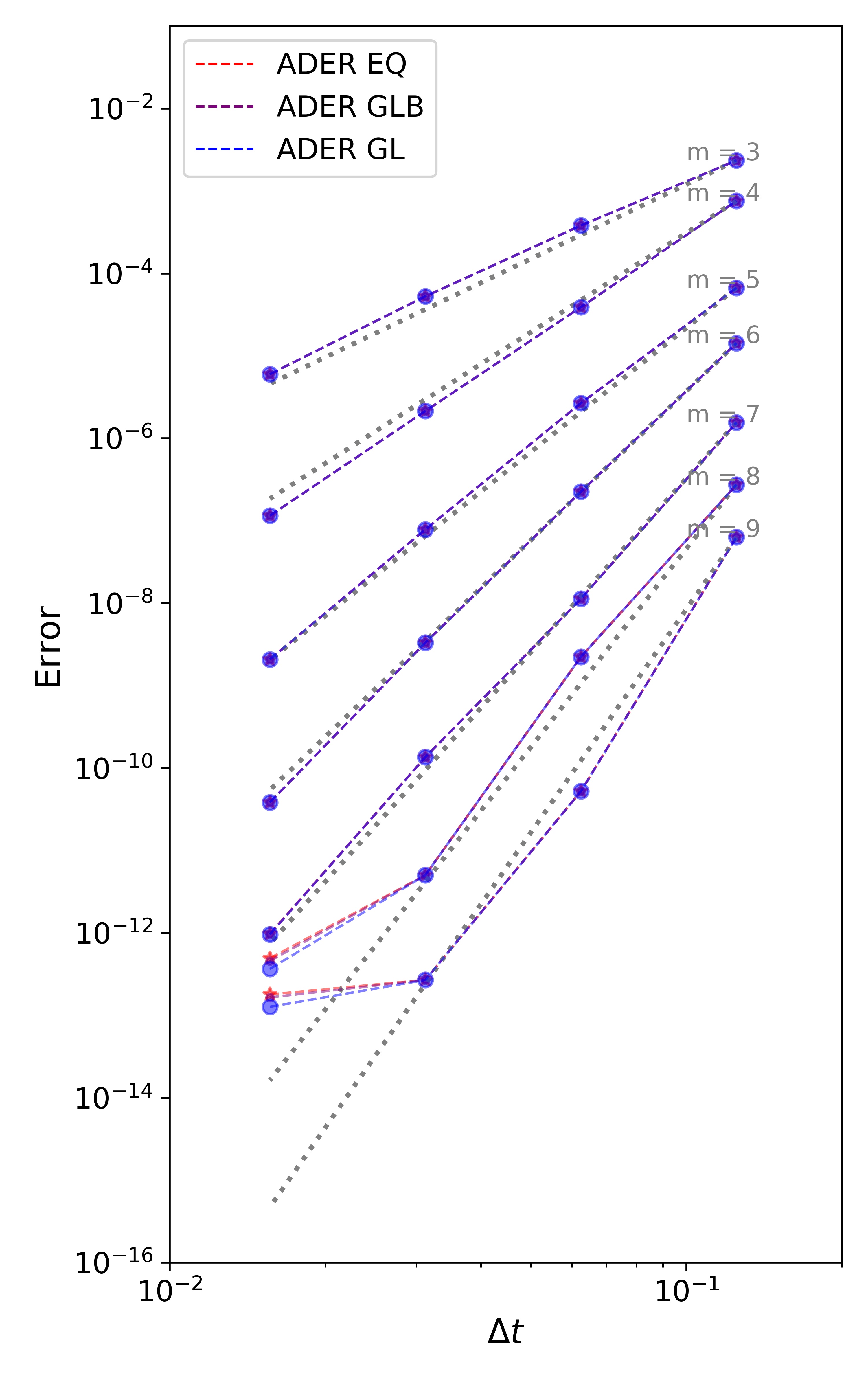}
\caption{Convergence curves for ADER  using SD for the space discretization for the Burgers' Equation \eqref{eq:Burgers}.}
\label{fig:burgers-conv-ader}
\end{minipage}
\hfill
\begin{minipage}[c]{0.45\linewidth}
\includegraphics[width=\linewidth]{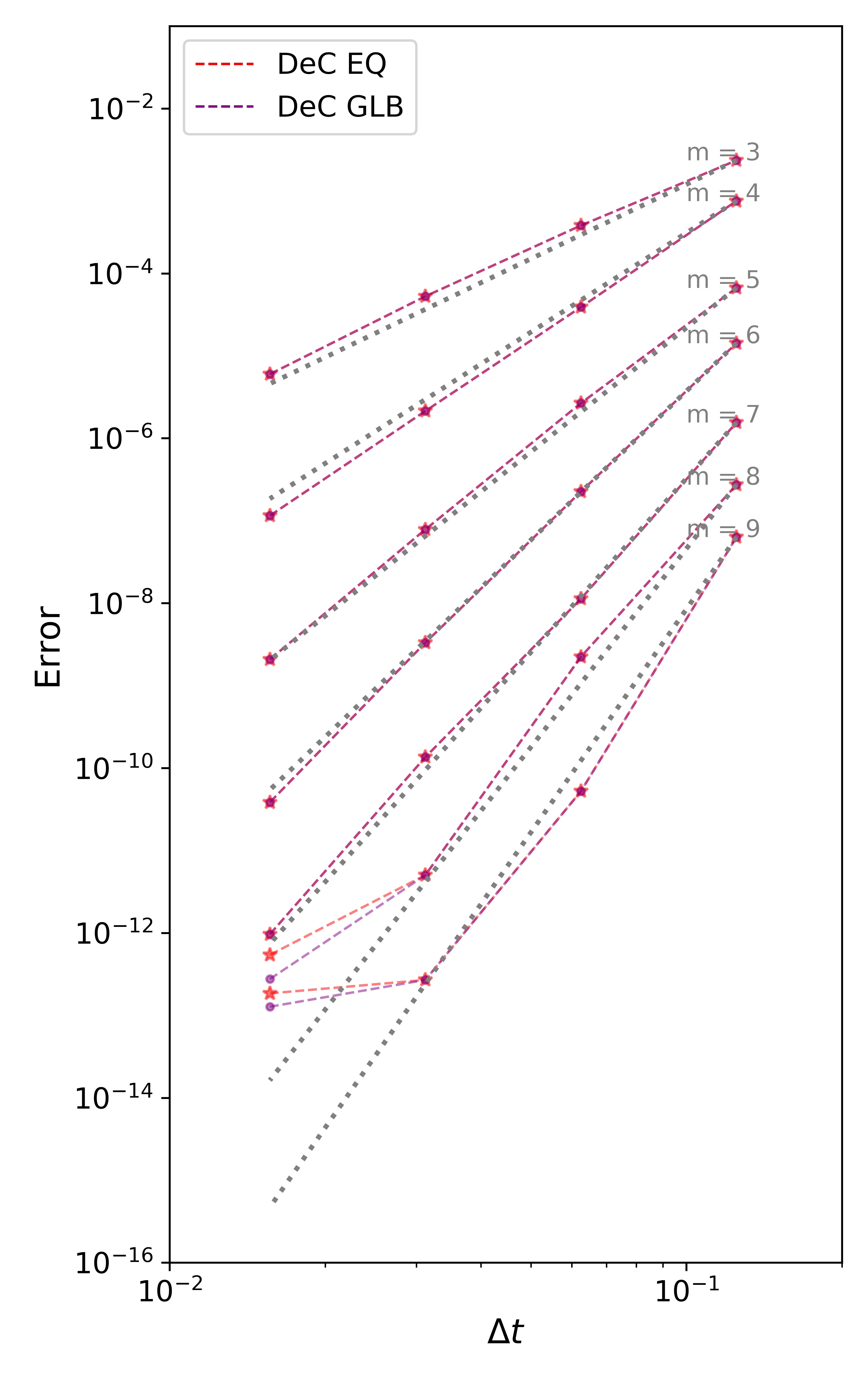}
\caption{Convergence curves for DeC using SD for the space discretization for the Burgers' Equation \eqref{eq:Burgers}.}
\label{fig:burgers-conv-dec}
\end{minipage}%
\end{figure}

\section{Summary and Outlook}\label{se:Summary}

In this paper, we have demonstrated the connection between the DeC framework and the ADER approach. In particular, for the explicit case, we showed that ADER can be interpreted as DeC, as well as DeC is equivalent to ADER up to a choice of test functions.

Since we embed ADER in the theoretical DeC framework, we were able to demonstrate theoretical results for ADER, e.g. how many iterations are needed to obtain the desired order, extending the results of \cite{hjackson2017}.

When considering the implicit versions of DeC and ADER (IMDeC and IMADER, respectively), more apparent differences emerged between these two methods. In particular, although the two methods might appear similar in their formulation, there are major differences: in the IMADER procedure, the mass matrix in time is mixed with the Jacobian of the stiff term, leading to a significantly larger mass matrix which has to be inverted at each time step, whereas in the IMDeC formulation, the stiff term is fully absorbed by the $\L^1$ operator, leading to a much simpler mass matrix. On the other side, the simulations of IMADER seems more accurate from our preliminary results.

We verify our theoretical finding by a variety of numerical simulations, with ODEs and PDEs. At the same time, we also studied the influence of the choice of collocation points in time and verified that there was not much of a difference, except that for very high orders.
For some choices of collocation points, namely, for ADER EQ and ADER GL, a decrease of the orders can be recognized 
when we are close to machine precision. The cause for this is not clear, one explanation could be a bad condition number of the mass matrix or other interpolation issues. However, the answer to this will be left open for future research.

Finally, we hope that, with this paper, the modern ADER approach and DeC become clearer to the hyperbolic community and 
that it becomes clear that these methods are very similar. 
Since we have provided some theoretical background 
for ADER as a time--integration scheme, many further extensions are
possible and new questions can be asked: for example, what is 
the relation between ADER and RK methods, as it was already done for DeC in \cite{christlieb2010integral}.
Another possibility is to rewrite ADER in SSP formulation, in the spirit of \cite{liu2008strong}, 
or to build ADER schemes which are positivity preserving and conservative by using a Patankar trick 
\cite{huang2019positivity,offner2019arbitrary}. Extensions 
to relaxation approaches introduced by Ketcheson et al. \cite{Ketcheson2019RelaxationRM,ranocha2019relaxation} are also possible to construct entropy conservative ADER schemes, where this is already work in progress for the DeC approach. 
Finally, a comparison of the IMADER and IMDeC including variable space discretization and  focusing on more complex stiff hyperbolic problems and the stability of such methods will be addressed in the future. 

Finally, in spirit of open science, all codes used to produce this paper are available in \cite{ourrepo}.

\section*{Acknowledgements}
P. \"Offner has been funded by the UZH Postdoc Grant (Number FK-19-104) and the SNF grant (Number 200021\_175784).
Davide Torlo is supported by ITN ModCompShock project funded by the European Union’s Horizon 2020 research and innovation program under the Marie Sklodowska-Curie grant agreement No 642768. M. Han Veiga acknowledges financial support from MIDAS.

\bibliographystyle{abbrv}
\bibliography{literature}

\begin{thebibliography}{10}

\bibitem{abgrall2006residual}
R.~Abgrall.
\newblock Residual distribution schemes: current status and future trends.
\newblock {\em Computers \& Fluids}, 35(7):641--669, 2006.

\bibitem{abgrall2017dec}
R.~Abgrall.
\newblock High order schemes for hyperbolic problems using globally continuous
  approximation and avoiding mass matrices.
\newblock {\em Journal of Scientific Computing}, 73(2):461--494, Dec 2017.

\bibitem{abgrall2019high}
R.~Abgrall, P.~Bacigaluppi, and S.~Tokareva.
\newblock High-order residual distribution scheme for the time-dependent euler
  equations of fluid dynamics.
\newblock {\em Computers \& Mathematics with Applications}, 78(2):274--297,
  2019.

\bibitem{abgrall2018connection}
R.~Abgrall, E.~l. Meledo, and P.~{\"O}ffner.
\newblock On the connection between residual distribution schemes and flux
  reconstruction.
\newblock {\em arXiv preprint arXiv:1807.01261}, 2018.

\bibitem{abgrall2020error}
R.~Abgrall, E.~l. M{\'e}l{\'e}do, P.~{\"O}ffner, and H.~Ranocha.
\newblock Error boundedness of correction procedure via reconstruction/flux
  reconstruction and the connection to residual distribution schemes.
\newblock In A.~Bressan, M.~Lewicka, D.~Wang, and Y.~Zheng, editors, {\em
  Hyperbolic Problems: Theory, Numerics, Applications}, volume~10 of {\em AIMS
  on Applied Mathematics}, pages 215--222, Springfield, 2020. American
  Institute of Mathematical Sciences.

\bibitem{abgrall2019analysis}
R.~Abgrall, J.~Nordstr{\"o}m, P.~{\"O}ffner, and S.~Tokareva.
\newblock Analysis of the {SBP}-{SAT} stabilization for finite element methods
  part {II}: Entropy stability.
\newblock {\em Communications on Applied Mathematics and Computation
  (accepted)}, 2020.

\bibitem{abgrall2018asymptotic}
R.~Abgrall and D.~Torlo.
\newblock High order asymptotic preserving deferred correction
  implicit-explicit schemes for kinetic models.
\newblock {\em SIAM Journal on Scientific Computing}, 42(3):B816--B845, 2020.

\bibitem{nicolas2011}
N.~Bacaer.
\newblock {\em A Short History of Mathematical Population Dynamics}.
\newblock 01 2011.

\bibitem{balsara2013efficient}
D.~S. Balsara, C.~Meyer, M.~Dumbser, H.~Du, and Z.~Xu.
\newblock Efficient implementation of ader schemes for euler and
  magnetohydrodynamical flows on structured meshes--speed comparisons with
  runge--kutta methods.
\newblock {\em Journal of Computational Physics}, 235:934--969, 2013.

\bibitem{Busto2020}
S.~Busto, S.~Chiocchetti, M.~Dumbser, E.~Gaburro, and I.~Peshkov.
\newblock High order ader schemes for continuum mechanics.
\newblock {\em Frontiers in Physics}, 8:32, 2020.

\bibitem{butcher08numODE}
J.~C. Butcher.
\newblock {\em Numerical Methods for Ordinary Differential Equations}.
\newblock John Wiley \& Sons, Ltd, 2008.

\bibitem{christlieb2010integral}
A.~Christlieb, B.~Ong, and J.-M. Qiu.
\newblock Integral deferred correction methods constructed with high order
  {R}unge-{K}utta integrators.
\newblock {\em Mathematics of Computation}, 79(270):761--783, 2010.

\bibitem{dematte2019ader}
R.~Dematt{\'e}, V.~A. Titarev, G.~Montecinos, and E.~Toro.
\newblock Ader methods for hyperbolic equations with a time-reconstruction
  solver for the generalized riemann problem: the scalar case.
\newblock {\em Communications on Applied Mathematics and Computation}, pages
  1--34, 2019.

\bibitem{dumbser2008unified}
M.~Dumbser, D.~S. Balsara, E.~F. Toro, and C.-D. Munz.
\newblock A unified framework for the construction of one-step finite volume
  and discontinuous {G}alerkin schemes on unstructured meshes.
\newblock {\em Journal of Computational Physics}, 227(18):8209--8253, 2008.

\bibitem{dumbser2007FVStiff}
M.~Dumbser, C.~Enaux, and E.~F. Toro.
\newblock Finite volume schemes of very high order of accuracy for stiff
  hyperbolic balance laws.
\newblock {\em J. Comput. Phys.}, 227(8):3971–4001, Apr. 2008.

\bibitem{dumbser2018efficient}
M.~Dumbser, F.~Fambri, M.~Tavelli, M.~Bader, and T.~Weinzierl.
\newblock Efficient implementation of ader discontinuous galerkin schemes for a
  scalable hyperbolic pde engine.
\newblock {\em axioms}, 7(3):63, 2018.

\bibitem{dutt2000dec}
A.~Dutt, L.~Greengard, and V.~Rokhlin.
\newblock {Spectral Deferred Correction Methods for Ordinary Differential
  Equations}.
\newblock {\em BIT Numerical Mathematics}, 40(2):241--266, 2000.

\bibitem{glaubitz2020stable}
J.~Glaubitz and P.~{\"O}ffner.
\newblock Stable discretisations of high-order discontinuous {G}alerkin methods
  on equidistant and scattered points.
\newblock {\em Applied Numerical Mathematics}, 151:98--118, 2020.

\bibitem{glaubitz2018application}
J.~Glaubitz, P.~{\"O}ffner, and T.~Sonar.
\newblock Application of modal filtering to a spectral difference method.
\newblock {\em Mathematics of Computation}, 87(309):175--207, 2018.

\bibitem{hairer96ODE2}
E.~Hairer and G.~Wanner.
\newblock {\em Solving Ordinary Differential Equations II. Stiff and
  Differential-Algebraic Problems}.
\newblock Springer, Berlin, 1996.

\bibitem{ourrepo}
M.~Han~Veiga, P.~\"Offner, and D.~Torlo.
\newblock {ADER and DeC implementations}.
\newblock \url{https://git.math.uzh.ch/abgrall_group/dec-is-ader}, 02 2020.

\bibitem{huang2019positivity}
J.~Huang and C.-W. Shu.
\newblock Positivity-preserving time discretizations for
  production--destruction equations with applications to non-equilibrium flows.
\newblock {\em Journal of Scientific Computing}, 78(3):1811--1839, 2019.

\bibitem{hjackson2017}
H.~Jackson.
\newblock On the eigenvalues of the ader-weno galerkin predictor.
\newblock {\em Journal of Computational Physics}, 333:409 -- 413, 2017.

\bibitem{Ketcheson2019RelaxationRM}
D.~I. Ketcheson.
\newblock Relaxation {R}unge-{K}utta {M}ethods: {C}onservation and {S}tability
  for {I}nner-{P}roduct {N}orms.
\newblock {\em SIAM J. Numerical Analysis}, 57:2850--2870, 2019.

\bibitem{liu2008strong}
Y.~Liu, C.-W. Shu, and M.~Zhang.
\newblock Strong stability preserving property of the deferred correction time
  discretization.
\newblock {\em Journal of Computational Mathematics}, pages 633--656, 2008.

\bibitem{Liu2006}
Y.~Liu, M.~Vinokur, and Z.~Wang.
\newblock Spectral difference method for unstructured grids i: Basic
  formulation.
\newblock {\em Journal of Computational Physics}, 216(2):780 -- 801, 2006.

\bibitem{minion2003dec}
M.~L. Minion.
\newblock Semi-implicit spectral deferred correction methods for ordinary
  differential equations.
\newblock {\em Commun. Math. Sci.}, 1(3):471--500, 09 2003.

\bibitem{nordstrom2013summation}
J.~Nordstr{\"o}m and T.~Lundquist.
\newblock Summation-by-parts in time.
\newblock {\em Journal of Computational Physics}, 251:487--499, 2013.

\bibitem{offner2019error}
P.~{\"O}ffner and H.~Ranocha.
\newblock Error boundedness of discontinuous {G}alerkin methods with variable
  coefficients.
\newblock {\em Journal of Scientific Computing}, 79(3):1572--1607, 2019.

\bibitem{offner2019arbitrary}
P.~{\"O}ffner and D.~Torlo.
\newblock Arbitrary high-order, conservative and positive preserving
  {P}atankar-type deferred correction schemes.
\newblock {\em Applied Numerical Mathematics}, 153:15--34, 2020.

\bibitem{rackauckas2017differentialequations}
C.~Rackauckas and Q.~Nie.
\newblock Differentialequations. jl--a performant and feature-rich ecosystem
  for solving differential equations in julia.
\newblock {\em Journal of Open Research Software}, 5(1), 2017.

\bibitem{ranocha2019relaxation}
H.~Ranocha, M.~Sayyari, L.~Dalcin, M.~Parsani, and D.~I. Ketcheson.
\newblock Relaxation runge--kutta methods: Fully discrete explicit
  entropy-stable schemes for the compressible euler and navier--stokes
  equations.
\newblock {\em SIAM Journal on Scientific Computing}, 42(2):A612--A638, 2020.

\bibitem{Schwartzkopff2002ADER}
T.~Schwartzkopff, C.-D. Munz, and E.~F. Toro.
\newblock Ader: A high-order approach for linear hyperbolic systems in 2d.
\newblock {\em Journal of Scientific Computing}, 17(1-4):231--240, 2002.

\bibitem{titarev2002ader}
V.~A. Titarev and E.~F. Toro.
\newblock Ader: Arbitrary high order {G}odunov approach.
\newblock {\em Journal of Scientific Computing}, 17(1-4):609--618, 2002.

\bibitem{TitarevWAF2006}
V.~A. Titarev and E.~F. Toro.
\newblock {Analysis of ADER and ADER-WAF schemes}.
\newblock {\em IMA Journal of Numerical Analysis}, 27(3):616--630, 11 2006.

\bibitem{torlo2020hyperbolic}
D.~Torlo.
\newblock {\em Hyperbolic Problems: High order methods and model order
  reduction}.
\newblock PhD thesis, University Zurich, 2020.

\bibitem{toro2009riemann}
E.~Toro.
\newblock {\em Riemann Solvers and Numerical Methods for Fluid Dynamics: A
  Practical Introduction}.
\newblock Springer Berlin Heidelberg, 2009.

\bibitem{toro2001towards}
E.~Toro, R.~Millington, and L.~Nejad.
\newblock Towards very high order {G}odunov schemes.
\newblock In {\em Godunov methods}, pages 907--940. Springer, 2001.

\bibitem{grptoro}
E.~Toro and V.~Titarev.
\newblock Solution of the generalized {R}iemann problem for advection--reaction
  equations.
\newblock {\em Proceedings of the Royal Society of London. Series A:
  Mathematical, Physical and Engineering Sciences}, 458(2018):271--281, 2002.

\bibitem{Vanharen2017}
J.~Vanharen, G.~Puigt, X.~Vasseur, J.-F. Boussuge, and P.~Sagaut.
\newblock Revisiting the spectral analysis for high-order spectral
  discontinuous methods.
\newblock {\em Journal of Computational Physics}, 337:379 -- 402, 2017.

\bibitem{veiga2020arbitrary}
M.~H. Veiga, D.~A. Velasco-Romero, Q.~Wenger, and R.~Teyssier.
\newblock An arbitrary high-order spectral difference method for the induction
  equation, 2020.

\bibitem{wanner1996solving}
G.~Wanner and E.~Hairer.
\newblock {\em Solving ordinary differential equations II}.
\newblock Springer Berlin Heidelberg, 1996.

\bibitem{zanotti2015space}
O.~Zanotti, F.~Fambri, M.~Dumbser, and A.~Hidalgo.
\newblock Space--time adaptive {ADER} discontinuous {G}alerkin finite element
  schemes with a posteriori sub-cell finite volume limiting.
\newblock {\em Computers \& Fluids}, 118:204--224, 2015.

\end{thebibliography}

\end{document}